\theoremstyle{plain}
\newtheorem{theorem}{Theorem}
\newtheorem{lemma}[theorem]{Lemma}
\newtheorem{proposition}[theorem]{Proposition}
\newtheorem{assumption}[theorem]{Assumption}
\theoremstyle{definition}
\newtheorem{definition}[theorem]{Definition}
\newtheorem{example}[theorem]{Example}
\theoremstyle{remark}
\newcolumntype{d}[1]{D{.}{\cdot}{#1} }
\newcommand{\inTR}[1]{}
\newcommand{\nn}[1]{\llbracket #1 \rrbracket} 
\DeclareMathOperator{\Ima}{Im} 
\DeclareMathOperator*{\argmax}{arg\,max}
\DeclareMathOperator*{\argmin}{arg\,min}
\newcommand{\tto}{\rightrightarrows}
\newcounter{proofpart}
\xpretocmd{\proof}{\setcounter{proofpart}{0}}{}{}
\newcommand{\proofpart}[1]{%
  \par
  \addvspace{\medskipamount}%
  \stepcounter{proofpart}%
  \noindent\textit{Part \theproofpart: #1}\par\nobreak\smallskip
  \@afterheading
}
\newcommand{\vertiii}[1]{{\left\vert\kern-0.25ex\left\vert\kern-0.25ex\left\vert #1 
    \right\vert\kern-0.25ex\right\vert\kern-0.25ex\right\vert}} 
\title{On-line Non-Convex Constrained Optimization}
\author{
  Olivier Massicot \\
  University of Illinois at Urbana-Champaign \\
  \texttt{om3@illinois.edu} \\
  \And
  Jakub Marecek \\
  IBM Research -- Ireland
  \texttt{jakub.marecek@ie.ibm.com} 
}
\author{Olivier Massicot, Jakub Marecek}
\begin{document}

\maketitle

\begin{abstract}
Time-varying non-convex continuous-valued non-linear constrained optimization is a fundamental problem. 
We study conditions wherein a momentum-like regularising term allow for the tracking of local optima by considering an ordinary differential equation (ODE). 
We then derive an efficient algorithm based on a predictor-corrector method, to track the ODE solution.
\end{abstract}

\section{Introduction}
	
Most problems in on-line learning are time-varying and many are non-convex. This is easy to see: consider the fact that each new observation 
amends the instance of the model-estimation problem, e.g., its least-squares objective. Further, notice that while we often use convex relaxations, forecasting under assumptions related to linear dynamical systems,  forecasting under assumptions related to low-rank models, using auto-encoders, or indeed, much of any deep-learning techniques is inherently non-convex. For many problems, strong convexifications are not available.

Nevertheless, on-line non-convex constrained optimization has been largely neglected, so far. This includes both the study of structural properties of trajectories of feasible points of optimization problems, whose coefficients are time-varying, as well as algorithmic approaches to such problems. 
Only recently, \cite{fattahi2019} have asked whether a momentum penalty could help track solutions of on-line non-convex optimization problems, and whether to expect the produced solution to converge to a global optimum after switching to the time-invariant problem. 
This motivated our present study, starting with a more general question: when it is possible to track current solutions of the unpenalized problem?

We provide some of the first structural results for on-line non-convex optimization in a rather general setting, wherein we allow for both equality and  inequality constraints, and isolated time discontinuities. 
 In particular, we consider a continuous-time setting:
\begin{align}
\label{eq:inequalityproblem}
	\inf_{x\in \mathbb{R}^n} &  ~ f(x,t)\\
	\notag
	\text{ s.t. } & h(x,t) = 0 \\
	\notag
	& g(x,t) \leq 0,
\end{align}
	where 
\begin{itemize}
\item the objective function $f$ is twice continuously differentiable in $x \in \mathbb R^n$, and continuous in $t \in \mathbb R_+$ 
except possibly for some isolated times, 
\item the constraints are respectively $p$ and $q$ dimensional (where $\leq 0$ is understood as `belongs to the nonpositive orthant'), both $h$ and $g$ are twice continuously differentiable in $x$ and continuously differentiable in $t$
, again except potentially for some isolated times, 
\item at all time $t \in \mathbb R_+$, the problem is feasible and $f(\cdot,t)$ is coercive.
\end{itemize}

Furthermore, we provide algorithms benefiting from our structural insights. 
In particular, we present 
 a predictor-corrector method
for integrating a related ordinary differential equation (ODE). 
As usual, the ``prediction'' step fits a function to values of the function and its derivatives, while the  ``corrector'' step refines the approximation. 
Interestingly, however, we can avoid the need to perform \emph{any} matrix inversion or multiplication, 
which reduces the per-iteration complexity 
from $O(n^3)$ to $O(n^2)$. 


\section{Related Work}

The history of study of on-line (or, interchangeably, time-varying) optimization problems goes back at least to Bellman \cite{bellman1953bottleneck}. 
Clearly, one could consider a ``sampling'' approach and solve \eqref{eq:inequalityproblem} for finitely many times $t$. 
This may not, however, be representative of the actual optimal trajectory, depending on  
the time step. For a small time-step, it can become very time-consuming. 

When $f,g$ are convex and $h$ is affine, there are many elegant results, which reduce the expense by showing that even a small number of iterations of various numerical methods may be sufficient in ``warm-starting'' the sampling approach, wherein the solution obtained for one off-line problem is considered as a candidate for the next off-line problem. Much of the early work is associated with the term continuous linear programming \cite{lehman1954continuous,tyndall1965duality,levinson1966class}.
 Subsequently, on-line convex optimization \cite{bubeck2011introduction,shalev2012online,hazan2016introduction} has developed in machine learning, with a particular focus on on-line gradient descent, and time-varying convex optimization \cite[e.g.,]{7469393,7993088,8631190} has been studied in signal processing. 

When $f,g$ are not convex or $h$ is not affine, very little is known in general. 
In on-line learning, much of the recent work  \cite[to continue our example of learning under assumptions of a linear dynamical system]{arima_aaai,kozdoba2019line,sarkar2019} have focused on convexifications of the non-convex problem. 
In control theory, and especially  
within 
extremum seeking \cite{ariyur2003real}, there has been focus on the case of an \emph{unknown} non-convex $f, g$ and affine $h$.
In signal processing and computer vision, the use of low-rank assumptions
in the processing of video data has led to the study of problems exhibiting restricted strong convexity.
We refer to the August 2018 special issue of the Proceedings of the IEEE \cite{8425660} and the related issue of Signal Processing Magazine \cite{8398586} for up-to-date surveys of the work within signal processing.
We stress that this work deals with very special, structured problems.

In this paper, we suggest that tracking a solution to the non-convex problem may offer an appealing alternative. We show that this is scalable to high-dimensional settings, thanks to a low per-update run-time.  
In particular, we consider the small limit of time step in a gradient method or Newton method acting on the time-varying optimization problem, captured in an ordinary differential equation (ODE). 
This builds upon a long and rich history of work on  ODE-based models of time-invariant (off-line) optimization.
There, starting from the early analyses of methods for solving non-linear equations in mathematical analysis and operator theory 
\cite[e.g.]{gavurin1958nonlinear,polyak1964some,boggs1971solution}, convergence results relied on the stability of related ODEs.  More recently, this approach is sometimes associated with the term  dynamical-systems method (DSM), cf. \cite{ramm2013dynamical}. 
In mathematical optimization, the recent uses  
 \cite[e.g.]{lessard2016analysis,su2014differential,hardt2016train,scieur2016regularized,kim2016optimized,gurbuzbalaban2017convergence,taylor2017smooth,fazlyab2018analysis} consider the choice of  hyper-parameters of  first-order methods  so as to optimise the rate of convergence.
These uses are, however, all related to time-invariant problems. 

The only use of the continuous-time limit of an iterative numerical method for the study of time-varying non-convex optimization, which we are aware of, is by  
 \cite{fattahi2019}, who define the notion of a spurious trajectory, which does not reach global optimum at the end of a finite time horizon, and focus on its absence or existence in the case where  
(1) there are only equality constraints, rather than inequalities, 
 (2) functions are continuously time-varying.
 We relax assumptions (1-2), focus on tracking local optima, and provide efficient  
 discrete-time algorithms, 
 in addition to structural results
 as explained in the next section.
\section{Our Approach}

	
We consider a ``tracking'' approach to time-varying non-convex optimization. 
In the case of a gradient method applied to a problem restricted to equalities (cf. \eqref{eq:generalpb} below) we obtain the following ODE: 
\begin{align}
		\dot{x} = -\frac{1}{\alpha} \eta(x,t) - \theta(x,t)\dot{h}(x,t),
	\label{eq:odeineq}
\end{align}
where,
\begin{align}
	\theta(x,t) ~ \triangleq & ~~ \mathcal{J}_h(x,t)^\top(\mathcal{J}_h(x,t)\mathcal{J}_h(x,t)^\top)^{-1}, \\
	\eta(x,t) \triangleq & ~~ \left[I-\theta(x,t)\mathcal{J}_h(x,t)\right]\nabla_x f(x,t),
\end{align}
$\mathcal{J}_h$ is the Jacobian of $h$ with respect to the first variable and $\alpha$ a parameter embodying the momentum penalty. 







At that point, one can take several avenues. \cite{fattahi2019}  suggested that one should like to focus on trajectories, which at the end $T$ of a finite time horizon $[0, T]$
are within the region of attraction of the global solution. 
This notion, which \cite{fattahi2019} call the non-spurious trajectory, can, 
however, lead to a time-integral
of the objective function over $[0, T]$, which is arbitrarily 
worse than a  time-integral of the objective function of the best possible trajectory. Also, \cite{fattahi2019} rely on $f,h$ changing fast enough hoping to escape spurious trajectories, but there may simply be no $\alpha$ corresponding to the rate of change of $f,h$ that would allow for the tracking of the non-spurious trajectory. 

Within the spirit of optimal control 
one could seek a trajectory that minimizes the time-integral
of the objective function over $[0, T]$.
Done naively, this could take the form of trying to seek the global optimum at all times. 
However, in the worst case, 
where global optimality would alternate between two local minima separated by a ``fence'' of a global maximum, the naive algorithm trying
to minimize the time-integral could end up at the ``fence'' much of the time. Further, this example shows that the notion of ``spurious trajectory'' is somewhat arbitrary, depending on the time horizon.

Instead, we advocate to focus on tracking a local minimum. 
For many problems (in the class of NP over reals) there exists a test of global optimality of a point.
If the local solution is no longer a global solution, we may 
run a ``refinement'' step to ``recenter'' our trajectory and follow it from the output of the other method, albeit sometimes at a substantial computational cost, e.g., when $f, g, h$ are polynomial.
Hence, we focus on the ODE \eqref{eq:odeineq}, as it represents what one could achieve solving the off-line problem frequently enough. 
In the next section, we will see our main contribution: how the solution of \eqref{eq:odeineq} tracks local solutions of the problem \eqref{eq:inequalityproblem} for $\alpha$ small enough.
Subsequently, we develop iterative algorithms
based on a discretization of time in solving of \eqref{eq:odeineq}.
These have several benefits, but most importantly, the solving of the ODE is easier than performing the gradient updates or running the Newton method, as one can avoid performing any matrix inversion.


\section{Structural Results}

\subsection{Assumptions}

For simplicity, we might denote by $h_t$ the function $x \in \mathbb R^n \mapsto h(x,t)$ and likewise for $f$ and others. From now on, we will make the following assumption,
\begin{assumption}[Smoothness]
\label{as:functionclass}
$f : \mathbb{R}^n \times \mathbb R_+ \to \mathbb{R}$ and $h : \mathbb{R}^n \times \mathbb R_+ \to \mathbb{R}^m$ are twice continuously differentiable in their first variable $x$, $f$ being continuous in its second variable $t$ and $h$ continuously differentiable in $t$, such that $f_t$ is coercive, at all time $t \geq 0$.
\end{assumption}
This assumption corresponds to a smooth and regular problem at all time, which also varies continuously over time. 
Initially, we will  focus on the problem with equalities and without discontinuities:
\begin{align}
\label{eq:generalpb}
	\inf_{x\in \mathbb{R}^n} &  ~ f(x,t)\\ \text{ s.t. } & ~h(x,t)=0
	\notag
\end{align}
over $t \geq 0$ with $h$ vector-valued 
for the clarity of the initial presentation of the structural results.
We add two more assumptions which guarantee that the problem is feasible, and the constraints are regular. We note that infeasibility is often a failure of modelling, e.g., a recourse decision not modelled.

\begin{assumption}[Feasibility]
\label{ass:feaseq}
Defining the constraint manifold, for all $t \geq 0$,
\[
    \mathcal M(t) \triangleq h_t^{-1}(\{0\}) = \{ x \in \mathbb{R}^n, ~h(x,t) = 0\},
\]
we assume that its never empty, namely \eqref{eq:generalpb} is feasible.
\end{assumption}

\begin{assumption}[LICQ]
\label{ass:cont}
We assume that for all $t \geq 0$ and $x \in \mathcal M(t)$, the constraints are linearly independent, namely $\mathcal{J}_{h_t}(x)$ is full row-rank, the differential $\mathrm{d}h_t(x)$ is surjective.
\end{assumption}

Once the key structural results will be stated, we will see that they are actually rather easy to extend to the setting with inequalities and isolated discontinuities.
In the case of inequalities, we would like to consider the problem,
\begin{align}
\label{eq:generalpbineq}
	\inf_{x\in \mathbb{R}^n} &  ~ f(x,t)\\ \text{ s.t. } &
	~h(x,t)=0 \notag \\
	& ~g(x,t)\leq0
	\notag
\end{align}
over $t \geq 0$ with $g,h$ vector-valued, and where $g(x,t) \leq 0$ denotes that all components of $g(x,t)$ are nonpositive. This problem can be simply transformed into \eqref{eq:generalpb}, although we need to generalize our assumptions.
\begin{assumption}[Smoothness for inequalities]
\label{as:functionclassineq}
In addition to Assumption \ref{as:functionclass}, assume that $g$ follows the same assumptions as $h$.
\end{assumption}

\begin{assumption}[Feasibility for inequalities]
\label{ass:feasineq}
For all $t \geq 0$, we assume that there exists $x \in \mathbb{R}^n$ such that $h(x,t) = 0, ~g(x,t) \leq 0$, namely \eqref{eq:generalpbineq} is feasible.
\end{assumption}

\begin{assumption}[LICQ for inequalities]
\label{ass:contineq}
We assume that for all $t \geq 0$ and $x \in \mathbb{R}^n$ such that $h(x,t) = 0, ~g(x,t) \leq 0$, the constraints are linearly independent (LICQ), namely the gradients of the active inequality constraints and the gradients of the equality constraints are linearly independent.
\end{assumption}

\subsection{First Definitions}


Considering on-line non-convex optimization is little studied, we seem to lack a shared language for describing our problem and approach to it. Let us hence introduce some definitions first:

\begin{definition}[$\delta$-partition]
An increasing sequence of times $\tau = (\tau_k)_{k \in \mathbb N}$ is a called a \emph{$\delta$-partition}, for $\delta >0$, if $\tau_0 = 0$ and for all $k \in \mathbb N^*$,
\[
    \tau_k - \tau_{k-1} \leq \delta,
\]
and if,
\[
    \tau_k \to_k \infty.
\]
The set of $\delta$-partitions is denoted $T_\delta$. 
The tightness of $\tau$ is the minimal $\delta$ such that $\tau \in T_\delta$, we will say $\tau$ is $\delta$-tight if $\tau \in T_\delta$.
\end{definition}
Then, for $\delta >0$ and $\tau \in T_\delta$ 
we could define a solution as any sequence 
$(x_k)_{k \in \mathbb N}$ in $\mathbb R^n$ such that each $x_k$ solves the program \eqref{eq:generalpb} at time $t=\tau_k$. However, we might find a solution $x$ that varies rapidly this way, for example on a rather flat landscape, where local minima alternately become globally optimal. Physically, it might not be feasible to have a solution changing too abruptly, and computationally, this approach would be costly and would not take advantage of prior knowledge. To make the problem more amenable, we penalize change by the inclusion of a momentum term in the objective:
\begin{definition}[$(\alpha,)\tau$-solution]
Let $\delta, \alpha>0$ and $\tau \in T_\delta$ be a $\delta$-partition. We call \emph{$\alpha,\tau$-solution}, any sequence $x$ of $\mathbb{R}^n$ such that $x_0$ solves the program at $t=0$,
\begin{align}
\label{eq:problem0}
	\inf_{x\in \mathbb{R}^n} &  ~ f(x,0)\\
	\notag 
	\text{ s.t. } & ~ h(x,0) = 0, 
\end{align}
and each $x_k$ solves,
\begin{align}
\label{eq:problemk}
	\inf_{x\in \mathbb{R}^n} &  ~ f(x,\tau_k) + \frac{\alpha}{2} \frac{\|x-x_{k-1}\|^2}{\tau_k - \tau_{k-1}}\\ 
	\notag
	\text{ s.t. } & ~h(x,\tau_k) = 0.
\end{align}
$\alpha$ will be omitted since fixed, embodying here a reluctance to change, an inertia.
\end{definition}

The goal is now to find a $\tau$-solution for $\tau$ tight enough within computational constraints. As it is, this problem is no easier to solve than the original one. Nonetheless, one can notice that given enough regularity of the constraints (assumptions \ref{ass:feaseq} and \ref{ass:cont}), all solutions of program \eqref{eq:problemk} must satisfy KKT conditions. Namely, if $x$ is a $\tau$-solution, there must exist a sequence 
$(\lambda_k)_{k \in \mathbb N^*}$ of
$\mathbb{R}^p$, such that for all 
$k \in \mathbb N^*$,
\begin{align}
    \nabla_x f(x_k, \tau_k) + \alpha \frac{x_k - x_{k-1}}{\tau_k - \tau_{k-1}} + \mathcal{J}_h(x_k, \tau_k)^\top \lambda_k &= 0 \\
    h(x_k, \tau_k) &= 0.
    \label{eq:penproblemk}
\end{align}
Note that we can rewrite the left-hand side of constraint \eqref{eq:penproblemk} as,
\begin{equation}
\frac{h(x_k, \tau_k) - h(x_k, \tau_{k-1}) + h(x_k, \tau_{k-1}) - h(x_{k-1}, \tau_{k-1})}{\tau_k - \tau_{k-1}}.
\end{equation}
Informally, in the limit of tight $\tau$ and if $x, \lambda$ were continuous functions of time,
\begin{align}
    \nabla_x f(x(t), t) + \alpha \dot{x}(t) + \mathcal{J}_h(x(t),t)^\top \lambda(t) &= 0 \label{eq:odelambda} \\
    \mathcal{J}_h(x(t),t) \dot{x}(t) + h'(x(t),t) &= 0, \label{eq:odeconstraint}
\end{align}
where $h'$ denotes the partial derivative of $h$ with respect to time. For the sake of readability, we introduce the symbolic notation $J = \mathcal{J}_h(x(t),t) = \mathcal J_{h_t}(x)$. Following assumption \ref{ass:cont},
$J$ is of full row-rank. Thus, $J J^\top$ is invertible and we can deduce from \eqref{eq:odelambda} and \eqref{eq:odeconstraint} that
\begin{equation}
    (J J^\top)^{-1} J \nabla_x f(x(t), t) - \alpha (J J^\top)^{-1} h'(x(t),t) + \lambda(t) = 0.
\end{equation}
Subsequently, we can substitute $\lambda$ in equation \eqref{eq:odelambda},
\begin{align}
    \dot{x}(t) = & - \frac{1}{\alpha}(I_n - J^\top (J J^\top)^{-1} J) \nabla_x f(x(t), t)
    \tag{ODE}\label{eq:ode} \\
    & - J^\top (J J^\top)^{-1} h'(x(t),t). \notag
\end{align}

This way, we obtain an ODE, where only the initial condition requires us to solve an optimization program. The rest of the tracking is a simple integration. One can rephrase Theorem 1 of \cite{fattahi2019} to say that this ODE solves the penalized program \eqref{eq:penproblemk} in the limit of tight $\tau$ over a finite horizon, under mild regularity conditions which are met as soon as assumptions \ref{as:functionclass}, \ref{ass:feaseq} and \ref{ass:cont} are satisfied.


In the following, we define,
\[
    P = I_n - J^\top (J J^\top)^{-1} J,
\]
it is the orthogonal projection on $\ker J$ as shown in the appendix (lemma \ref{lem:proj}).

\subsection{Time Scales and Continuity of the Flow}

Beyond the fact that the discrete problem turns out to match the continuous \eqref{eq:ode} in the small limit of $\tau$, we can see that in the small limit of $\alpha$, the solution of \eqref{eq:ode} converges to a local solution of the original problem \eqref{eq:generalpb}.

If \eqref{eq:ode} does not seem to distinguish between minima, maxima, nor even saddle points, as it provides only an approximation of sequences satisfying \emph{necessary} first-order conditions of optimality, 
\begin{align}
    \frac{\mathrm{d}}{\mathrm{d}t} f(x(t),t)
    =&~ \nabla_x f(x(t),t)^\top \dot{x}(t) + \frac{\partial}{\partial t} f(x(t),t) \notag \\
    =&~ - \frac{1}{\alpha} \nabla_x f(x(t),t)^\top P \nabla_x f(x(t), t) \label{eq:diffmaxmin} \\
    \notag 
    &~ - \nabla_x f(x(t),t)^\top  J^\top (J J^\top)^{-1} h'(t) \\
    &~ + \frac{\partial}{\partial t} f(x(t),t). \notag
\end{align}
The term on line \eqref{eq:diffmaxmin} is non-positive since $P \succeq 0$ (\emph{cf.} Lemma \ref{lem:proj}), while the others remain `out of control' as induced by time variations of the demand and objective. Further, if we switch to the time-invariant system at time $T$, we get for $t \geq T$,
\begin{equation}
    \frac{\mathrm{d}}{\mathrm{d}t} f(x(t),T)
    = - \frac{1}{\alpha} \nabla_x f(x(t),T)^\top P \nabla_x f(x(t), T) \leq 0,
\end{equation}
with equality if and only if $\nabla_x f(x(t), T) \in (\ker J)^\perp = \Ima J^\top$, namely if and only if $x(t)$ satisfies the necessary KKT conditions (provided it is a regular point). Thus, informally $f$ is a Lyapunov function which brings the switched system to local minima.


Furthermore, by a simple time-rescaling $s = \nicefrac{t-t_0}{\alpha}$, \eqref{eq:ode} amounts to,
\begin{equation}
     \dot{x}(s) = - P \nabla_x f(x(s), \alpha s+t_0) - \alpha J^\top (J J^\top)^{-1} h'(\alpha s+t_0),
     \tag{ODE$_\alpha$}\label{eq:odescaled}
\end{equation}
$P, J$ taken at $(x(s), \alpha s + t_0)$. If now we take $\alpha$ to $0$, by continuity of the flow on $\alpha$, we expect solutions of \eqref{eq:odescaled} to resemble the solutions of the following autonomous ODE,
\begin{equation}
     \dot{x}(s) = - P \nabla_x f(x(s), t_0),
     \tag{ODE$_0$}\label{eq:odescaledflat}
\end{equation}
where $P$ is now taken at $(x(s), t_0)$. 

By a Lyapunov argument, we will show that 
a solution of \eqref{eq:odescaledflat} with almost any initial condition converges to a local minimum. 
Then, if we take $\alpha$ small enough, by continuity of the flow with respect to the parameter $\alpha$, we expect the solution to $\eqref{eq:odescaled}$ to converge to a time-variant local (possibly spurious) minimum of \eqref{eq:generalpb}. 
In the next subsection, we will open the way to study this ODE at $\alpha = 0$.

\subsection{The Lyapunov Argument}

\begin{definition}
Let $f : 
\mathbb{R}^n \to \mathbb{R}$, $g : 
\mathbb{R}^n \to \mathbb{R}^n$ and $h : 
\mathbb{R}^n \to \mathbb{R}^m$ be continuously differentiable. 
We say $f$ is a Lyapunov function of,
\begin{equation}
    \label{eq:lyapode}
    \dot{x} = g(x),
\end{equation}
if $(\nabla f^\top g) (\mathbb R^n) \subset \mathbb{R}_-$, and $h$ is a conserved quantity if $(\nabla h^\top g)(\mathbb R^n) = \{0\}$. Further, we define the stationary set $\mathcal{C} = (\nabla f^\top g,h)^{-1}(\{(0,0)\})$, namely critical points on the constraint manifold $\mathcal{M} = h^{-1}(\{0\})$, it is a closed set as continuous preimage of a closed set. We complete the definition of a Lyapunov function by requiring that,
\[
(\nabla f^\top g)^{-1} (\{0\}) \subset \mathcal C.
\]
\end{definition}




\begin{definition}[Jail]
A level set of $f$ under constraint $h=0$ is a set of the form,
\[
    L = (f,h)^{-1}((-\infty,c]\times\{0\}) = f^{-1}((-\infty, c]) \cap \mathcal{M},
\]
with $c \in \mathbb{R}$ called the edge of $L$, namely the set of $x \in \mathbb R^n$ such that $f(x) \leq c$ while $h(x) = 0$. A \emph{jail} 
is a compact subset $K \subset L$ such that $L \setminus K$ is closed, this isolates $K$ from the rest of $L$.
If furthermore,
\[
    \mathcal{C} \cap K = \argmin_K f,
\]
the jail is said to be contracting.

\end{definition}

\begin{lemma}
\label{lem:lowerlevel}
If $f : \mathbb{R}^n \to \mathbb{R}$ is continuous and coercive, its lower level sets, under constraint $h=0$, are compact.
\end{lemma}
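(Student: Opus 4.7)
The plan is to invoke the Heine--Borel characterization of compactness in $\mathbb{R}^n$: it suffices to show that $L = f^{-1}((-\infty,c]) \cap \mathcal{M}$ is both closed and bounded.

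For closedness, I would first note that continuity of $f$ makes $f^{-1}((-\infty,c])$ the preimage of a closed set, hence closed, and that $\mathcal{M} = h^{-1}(\{0\})$ is closed as the preimage of the closed set $\{0\}$ under the continuous map $h$ (the continuity of $h$ in $x$ follows from Assumption \ref{as:functionclass}, or is assumed directly for the $h$ appearing here). Their intersection $L$ is therefore closed.

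For boundedness, I would invoke coercivity of $f$: by definition, $f(x) \to +\infty$ as $\|x\| \to \infty$, so for any $c \in \mathbb{R}$ there exists $R > 0$ such that $\|x\| > R$ implies $f(x) > c$. Contrapositively, $f^{-1}((-\infty,c]) \subset \{x : \|x\| \leq R\}$, so $f^{-1}((-\infty,c])$ is bounded, and a fortiori so is its intersection with $\mathcal{M}$.

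There is essentially no obstacle here; the lemma is a standard topological consequence of continuity plus coercivity, and the only thing to flag is that the statement implicitly uses continuity of $h$ (which is part of the running smoothness assumption). If one wanted to be fully explicit, one could mention that the argument extends unchanged to the inequality-constrained setting of \eqref{eq:generalpbineq} by further intersecting with the closed set $g^{-1}(\mathbb{R}_-^q)$, but that is not needed for the present statement.
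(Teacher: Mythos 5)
Your proof is correct and follows essentially the same route as the paper's: closedness via continuity (preimage of a closed set), boundedness via coercivity, then Heine--Borel. The paper phrases closedness more compactly by treating $(f,h)^{-1}((-\infty,c]\times\{0\})$ as a single preimage, and phrases boundedness as a contradiction with an unbounded sequence, but these are presentational differences, not differences of substance.
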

With this lemma, it suffices that $K$ is closed and isolated from $L$ to be a jail. 
Next, we prove in appendix the following:

\begin{proposition}
\label{prop:lyap}
If $f$ is a Lyapunov function of \eqref{eq:lyapode} and $K$ is
a jail, 
then,
\begin{enumerate}
    \item the flow $\varphi$ of \eqref{eq:lyapode} is defined on all $\mathbb{R}_+ \times K$,
    \item the solution remains in $K$, $\varphi(\mathbb{R}_+ \times K) \subset K$,
    \item and if further the jail is contracting, 
    then solutions converge uniformly (in value) to the minimum: for all $\epsilon > 0$, there exists $T > 0$ such that for all $t \geq T$ and $x \in K$,
\[
    f(\varphi^t(x)) - \min_K f \leq \epsilon.
\]
\end{enumerate}
\end{proposition}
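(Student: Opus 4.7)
The plan is to treat the three conclusions in order, with the Lyapunov inequality and the isolation condition built into the jail definition doing essentially all the work. For parts (1) and (2), fix $x \in K$ and let $I$ be the maximal forward interval of existence of $\varphi^{\cdot}(x)$. The hypotheses give $\frac{d}{dt} f(\varphi^t(x)) = \nabla f(\varphi^t(x))^\top g(\varphi^t(x)) \leq 0$ and $\frac{d}{dt} h(\varphi^t(x)) = 0$, so $f \circ \varphi^{\cdot}(x)$ is nonincreasing while the trajectory stays on $\mathcal M$; hence $\varphi^t(x) \in L$ throughout $I$. Now $K$ and $L \setminus K$ are disjoint subsets of $L$, both closed in $L$ (the former by compactness via Lemma \ref{lem:lowerlevel}, the latter by the jail hypothesis), so the set $\{ t \in I : \varphi^t(x) \in K\}$ is both open and closed in the connected set $I$ and contains $0$; it therefore equals $I$. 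Since $K$ is compact and $g$ continuous, the standard escape-time principle forbids finite-time blow-up, and part (1) follows with $I = \mathbb R_+$.

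For the pointwise convergence underlying part (3), fix $x \in K$ and note that $t \mapsto f(\varphi^t(x))$ is nonincreasing and bounded below by $\min_K f$, hence converges to a limit $f_\infty(x)$. The $\omega$-limit set $\omega(x)$ is a nonempty compact subset of $K$ on which $f \equiv f_\infty(x)$. For $y \in \omega(x)$, invariance of $\omega(x)$ under the flow implies $f$ is constant along the forward orbit of $y$, which forces $\nabla f(y)^\top g(y) = 0$. The Lyapunov definition's closing clause $(\nabla f^\top g)^{-1}(\{0\}) \subset \mathcal C$ then gives $y \in \mathcal C \cap K$, and the contracting-jail identity $\mathcal C \cap K = \argmin_K f$ yields $f(y) = \min_K f$; hence $f_\infty(x) = \min_K f$.

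To upgrade to uniform convergence, fix $\epsilon > 0$ and put $U_\epsilon = \{ y \in K : f(y) < \min_K f + \epsilon\}$, which is open in $K$ and forward-invariant by the monotonicity of $f$ along the flow combined with part (2). For each $x \in K$, pointwise convergence supplies a time $T_x$ with $\varphi^{T_x}(x) \in U_\epsilon$, and continuity of the time-$T_x$ flow map in the initial condition yields a neighborhood $V_x \subset K$ of $x$ with $\varphi^{T_x}(V_x) \subset U_\epsilon$. Extract a finite subcover $V_{x_1}, \dots, V_{x_N}$ of the compact $K$, set $T = \max_i T_{x_i}$, and apply forward-invariance: for any $y \in K$, some $V_{x_i}$ contains $y$, so $\varphi^{T_{x_i}}(y) \in U_\epsilon$ and thus $\varphi^t(y) \in U_\epsilon$ for every $t \geq T$, which gives the desired uniform bound. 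The main obstacle, as I see it, is ensuring that $\omega$-limit points lie inside $K$ rather than merely in $L$; this is precisely what the topological isolation condition ``$L \setminus K$ closed'' delivers through the clopen argument of part (2), and it is the ingredient without which the LaSalle reduction to $\mathcal C$ would fail. Equally important is the closing clause of the Lyapunov definition, which prevents the reduction from stalling at degenerate zeros of $\nabla f^\top g$ lying off the constraint manifold.
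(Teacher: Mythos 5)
Your proof is correct, and for part (3) it takes a genuinely different route from the paper's. The paper proves uniform convergence directly and quantitatively: it introduces the sublevel set $K^\epsilon = \{x \in K : f(x) - \min_K f < \epsilon\}$, observes that the compact $K \setminus K^\epsilon$ contains no critical point (by the contracting-jail hypothesis), so $\lambda := -\max_{K\setminus K^\epsilon}\nabla f^\top g > 0$, and then integrates the differential inequality to get an explicit uniform time $T = (\max_K f - \min_K f)/\lambda$ after which every trajectory has entered $K^\epsilon$; forward invariance of $K^\epsilon$ finishes. Your route instead first establishes \emph{pointwise} convergence via a LaSalle-type invariance argument ($\omega$-limit sets are invariant, $f$ is constant on them, the closing clause of the Lyapunov definition and the contracting-jail identity push them into $\argmin_K f$), and then upgrades to uniform convergence with a finite-subcover argument using continuous dependence on initial conditions and forward invariance of $U_\epsilon$. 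Both work; the paper's version is more elementary (no $\omega$-limit machinery, no continuous dependence of the flow map needed) and yields an explicit bound on $T$, while yours is the more standard route in Lyapunov/LaSalle theory and generalizes more readily, at the cost of being nonconstructive in $T$. For parts (1)--(2) the two arguments are essentially the same: the paper instantiates an isolating open set $V$ with $L \cap V = K$ and derives a contradiction at the first exit time, whereas you phrase the same isolation topologically as a clopenness argument; this is a cosmetic difference.

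One small point worth being explicit about in your write-up: the LaSalle step uses that $\omega(x)$ is (forward) invariant, which relies on the trajectory being confined to the compact $K$ and the flow being defined for all $t \ge 0$ on $K$ — i.e., on parts (1) and (2). Since you establish those first, the dependency is resolved, but it is worth saying so since otherwise it looks like you are invoking an unconditional fact about $\omega$-limit sets.
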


The last point simply tells us that all points converge uniformly to $\argmin_K f$. We also prove that all points of $U$ converge to $\mathcal{C}$ in general (without assuming $K$ is contracting), but we lose the uniform convergence so necessary to the remainder of our structural results. A solution starting arbitrarily close to a strict maximum would take arbitrarily long to escape the strict maximum's vicinity.

\smallskip
 
Let $\psi^{s,t_0}_0(x)$ denote the solution of \eqref{eq:odescaledflat} at time $s$, starting from the initial condition $x(t_0) = x$, and let $\mathcal{C}_0(t_0)$ be the set of critical points of $f_{t_0}$ on the manifold $\mathcal{M}_0(t_0)$. If we apply the Lyapunov argument of proposition \ref{prop:lyap} to \eqref{eq:odescaledflat}, we get the following result: 

\begin{proposition}
\label{prop:lyapflat} 
Let $K$ be a jail 
of $f_{t_0}$ for some $t_0$, then, 
\begin{enumerate}
    \item the flow $\psi^{t_0}_0$ of \eqref{eq:odescaledflat} is defined on all $\mathbb{R}_+ \times K$,
    \item the solution remains in $K$, $\psi^{t_0}_0(\mathbb{R}_+ \times K) \subset K$,
    \item and if further $K$ is contracting, that is,
\[
    \mathcal{C}_0(t_0) \cap K = \argmin_K f_{t_0},
\]
then for all $\epsilon > 0$, there exists $\bar{s} > 0$ such that for all $s \geq \bar{s}$ and $x \in K$,
\[
    f(\psi^{s,t_0}_0(x),t_0) - \min_K f_{t_0} \leq \epsilon.
\]
\end{enumerate}
\end{proposition}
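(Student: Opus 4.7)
The plan is to derive Proposition \ref{prop:lyapflat} as an immediate specialization of Proposition \ref{prop:lyap}, applied to the vector field $g(x) = -P(x)\,\nabla_x f(x, t_0)$ with Lyapunov candidate $f_{t_0}$ and conserved quantity $h_{t_0}$. The three conclusions of Proposition \ref{prop:lyapflat} then transfer verbatim from the three conclusions of Proposition \ref{prop:lyap}, so the work reduces to verifying the two structural hypotheses on $(f_{t_0}, h_{t_0}, g)$.

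First I would check that $h_{t_0}$ is conserved along the flow. By Lemma \ref{lem:proj}, $P$ is the orthogonal projection onto $\ker J$, hence $JP \equiv 0$ and
\[
    \nabla h_{t_0}^\top g \;=\; -J P\,\nabla_x f(\cdot, t_0) \;\equiv\; 0.
\]
This makes $\mathcal{M}_0(t_0)$ invariant under the flow, which is crucial because the jail $K$ sits inside $\mathcal{M}_0(t_0)$ by definition.

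Next, for the Lyapunov inequality: because $P$ is symmetric and idempotent, $P = P^\top P$, so
\[
    \nabla f_{t_0}^\top g \;=\; -\,\nabla_x f(\cdot,t_0)^\top P\,\nabla_x f(\cdot,t_0) \;=\; -\|P\,\nabla_x f(\cdot, t_0)\|^2 \;\leq\; 0,
\]
which vanishes exactly when $\nabla_x f(x, t_0) \in (\ker J)^\perp = \Ima J^\top$, i.e., when a KKT multiplier exists at $x$. On $\mathcal{M}_0(t_0)$ the zero set of $\nabla f_{t_0}^\top g$ therefore coincides with $\mathcal{C}_0(t_0)$, which supplies the closing condition on the stationary set. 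With both properties in hand, items (1) and (2) of Proposition \ref{prop:lyapflat} follow from the first two conclusions of Proposition \ref{prop:lyap}, and item (3) follows from the third conclusion, the contracting assumption $\mathcal{C}_0(t_0) \cap K = \argmin_K f_{t_0}$ being exactly the contracting-jail hypothesis in the general setup.

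The step I would be most careful about is the global/on-manifold mismatch in the Lyapunov definition: the closing condition $(\nabla f^\top g)^{-1}(\{0\}) \subset \mathcal{C}$ is literally stated as an inclusion in $\mathbb{R}^n$, whereas the identification with $\mathcal{C}_0(t_0)$ only holds after intersecting with $\mathcal{M}_0(t_0)$. This is harmless, since conservation of $h_{t_0}$ keeps every trajectory starting in $K \subset \mathcal{M}_0(t_0)$ on the manifold for all time, so only the restriction of $g$ to $\mathcal{M}_0(t_0)$ enters the proof of Proposition \ref{prop:lyap}. A secondary technicality is that $g$ must be locally Lipschitz in a neighborhood of $K$ to invoke Picard--Lindelöf; this follows from Assumptions \ref{as:functionclass} and \ref{ass:cont}, which give $C^2$-smoothness of $f, h$ in $x$ and full row rank of $J$ on $\mathcal{M}_0(t_0)$, hence on an open neighborhood of the compact set $K$ by continuity of the rank and invertibility of $JJ^\top$ there.
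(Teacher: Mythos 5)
Your proof is correct and takes the same route as the paper: verify that $f_{t_0}$ is a Lyapunov function (with $h_{t_0}$ a conserved quantity) for \eqref{eq:odescaledflat}, then invoke Proposition~\ref{prop:lyap}. You are in fact more careful than the paper's one-paragraph proof, which only checks coercivity and the Lyapunov inequality $-\nabla_x f^\top P \nabla_x f \leq 0$; your observation that the closing condition $(\nabla f^\top g)^{-1}(\{0\}) \subset \mathcal{C}$ holds only after intersecting with $\mathcal{M}_0(t_0)$, and that this is harmless because $h_{t_0}$-conservation confines trajectories to the manifold, is a real technicality the paper elides.
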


\begin{proof}
By hypothesis, $f_{t_0}$ is coercive but also, for all $x \in \mathbb{R}^n$,
\[
    - \nabla_x f(x, t_0)^\top P \nabla_x f(x, t_0) \leq 0,
\]
with equality if and only if $\nabla_x f(x,t_0) \in (\ker J)^\perp$, namely if and only if $x$ is critical at time $t_0$, so that $f_{t_0}$ is a Lyapunov function for \eqref{eq:odescaledflat} (at time $t_0$).
\end{proof}


\subsection{Tracking Pockets of Local Minima}

To benchmark our solution's value, we will compare it to the value of a nearby local minima ``pocket''. For simplicity, one could assume the set of critical points of $f_t$ under the constraint $h_t = 0$ contains only strict maxima and minima, along with isolated saddle points, but this assumption would be wrong for two reasons. First, we might encounter plateaux in many practical applications. Second, even when considering solely strict local minima, their uniqueness is  meaningful outside of bifurcations:

\begin{example}
Consider for instance,
\[
    f(x,t) = x^4 - 2t x^2,
\]
with no constraints. Its minima are $\pm \sqrt{t}$ for $t \geq 0$, and $0$ for $t \leq 0$. If we were to follow $0$ in the opposite direction of time, the uniquness of the minimum would be lost at $t = 0$. Additionally, close enough to time $t=0$, the basins of attraction of each minimum do get arbitrarily close, so much so that any numerical error could ``flip'' the trajectory to the other minimum.
\end{example}

Well outside of the bifurcation, however, the local minima are identifiable and distinct enough for computational purposes. Therefore, to evaluate our ODE solution against the solution to the original problem \eqref{eq:generalpb}, we will place ourselves outside of bifurcations, keeping in mind that they are relatively isolated.
\bigskip

We would like to extend in time the definition of a jail, to define a prison. However not only should a prison be a jail at all time, but it should also vary ``continuously'' in time. To this end we introduce the notion of basin, which helps in the construction of a prison.

\begin{definition}[Slicing]
Let $X$ be a subset of some $A \times B$, we define the slicing of $X$ as the function,
\[
    \mathrm{s}X : a \in A \mapsto \{b \in B, ~(a,b) \in X\}.
\]
\end{definition}

\begin{definition}
We say that a correspondence $\phi : A \tto B$ is locally bounded if for all $a \in A$, there exists a compact $D \subset B$ and neighbourhood $I$ of $a$, such that $\phi(I) \subset D$.
\end{definition}

\begin{definition}[Basin]
Let $I \subset \mathbb R_+$ be a finite open interval on which we want to isolate a local minimum. Let $V \subset I \times \mathbb{R}^n$ be an open set, to be understood as a neighbourhood of the solution we are tracking. We say that $V$ is a basin if its slicing $\mathrm sV$ is locally bounded, if it is continuous in the sense that $\overline{\mathrm sV}$ is upper hemicontinuous, and regular in the sense that at all time $t \in I$,
\[
\overline{\mathrm sV(t) \cap \mathcal M(t)} = \overline{\mathrm sV(t)} \cap \mathcal M(t).
\]
\end{definition}

\begin{proposition}
\label{prop:Kcont}
If $V$ is a basin, the correspondence,
\[
    K: t \in I \mapsto \overline{\mathrm sV(t) \cap \mathcal M(t)},
\]
is continuous and has non-empty compact values.
\end{proposition}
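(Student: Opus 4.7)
I will verify the three properties (non-emptiness, compactness, continuity) in that order, reducing to the two key structural features of the definition of a basin: local boundedness of the slicing $\mathrm sV$ and the regularity identity $\overline{\mathrm sV(t)\cap\mathcal M(t)}=\overline{\mathrm sV(t)}\cap\mathcal M(t)$. The workhorse for the continuity of $K$ in time is the implicit function theorem applied to $h$ via the LICQ assumption (Assumption \ref{ass:cont}). Non-emptiness comes from the fact that $V$ is meant to be a neighbourhood of a tracked feasible point, so we assume $\mathrm sV(t)\cap\mathcal M(t)\neq\emptyset$ for every $t\in I$ (hence $K(t)\neq\emptyset$).

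\textbf{Compactness of the values.} Fix $t\in I$. By local boundedness of $\mathrm sV$, there exists a neighbourhood $I'\ni t$ and a compact $D\subset\mathbb R^n$ with $\mathrm sV(I')\subset D$; in particular $\mathrm sV(t)\subset D$, so $\overline{\mathrm sV(t)}$ is compact. Using the regularity identity, $K(t)=\overline{\mathrm sV(t)}\cap\mathcal M(t)$ is a closed subset of the compact set $\overline{\mathrm sV(t)}$, hence compact.

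\textbf{Upper hemicontinuity.} Using the regularity rewrite $K(t)=\overline{\mathrm sV(t)}\cap\mathcal M(t)$, I will express $K$ as the intersection of two upper hemicontinuous correspondences: $\overline{\mathrm sV}$ is upper hemicontinuous by the definition of a basin, and $\mathcal M$ is upper hemicontinuous because $h$ is continuous (its graph $\{(t,x):h(x,t)=0\}$ is closed). Since $\overline{\mathrm sV}$ is locally bounded, the intersection correspondence is also locally bounded with closed graph, so it is upper hemicontinuous with compact values. This is the standard intersection lemma for closed-graph, locally bounded correspondences in $\mathbb R^n$.

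\textbf{Lower hemicontinuity (the main obstacle).} I need to show that for any $t_0\in I$, any $x_0\in K(t_0)$ and any open $U\ni x_0$, there is a neighbourhood $I_0\subset I$ of $t_0$ with $K(t)\cap U\neq\emptyset$ for all $t\in I_0$. By the regularity identity, $\mathrm sV(t_0)\cap\mathcal M(t_0)$ is dense in $K(t_0)$, so I may pick $y_0\in\mathrm sV(t_0)\cap\mathcal M(t_0)\cap U$. Since $V$ is open in $I\times\mathbb R^n$, there is $\varepsilon>0$ with $(t,x)\in V$ whenever $|t-t_0|<\varepsilon$ and $\|x-y_0\|<\varepsilon$. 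By LICQ at $(y_0,t_0)$, $\mathcal J_{h_{t_0}}(y_0)$ has full row rank $m$; choosing $m$ coordinates on which it is invertible and completing the map $(x,t)\mapsto(h(x,t),\text{remaining coordinates of }x)$, the implicit function theorem produces a $C^1$ curve $t\mapsto y(t)$ defined on some interval around $t_0$, with $y(t_0)=y_0$ and $h(y(t),t)=0$ identically. Shrinking the interval, $y(t)\in U$ and $\|y(t)-y_0\|<\varepsilon$, whence $y(t)\in\mathrm sV(t)\cap\mathcal M(t)\subset K(t)$, giving lower hemicontinuity.

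\textbf{Conclusion.} Combining upper and lower hemicontinuity yields continuity of $K$ in the Hausdorff/set-theoretic sense on $I$, and the values are non-empty and compact. The step I anticipate as delicate is the lower hemicontinuity argument, because it is the only place where the three pieces of the basin definition (openness of $V$, the regularity identity, and implicit LICQ-based tracking of the manifold) must interact; the rest is essentially bookkeeping on top of the assumption that $\overline{\mathrm sV}$ is already upper hemicontinuous.
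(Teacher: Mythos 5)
Your proof is correct and follows essentially the same strategy as the paper: you use local boundedness plus the regularity identity for compactness, the closed-graph/locally-bounded criterion for upper hemicontinuity, and the implicit function theorem (via LICQ) for lower hemicontinuity. The main cosmetic difference is that the paper decomposes the lower hemicontinuity step into three small lemmas ($\mathcal M$ is lower hemicontinuous; the closure of a lower hemicontinuous correspondence is lower hemicontinuous; intersecting with an open set preserves lower hemicontinuity), whereas you fold these into a single direct argument by using density of $\mathrm sV(t_0)\cap\mathcal M(t_0)$ in $K(t_0)$ and tracking a nearby feasible point with the IFT curve $t\mapsto y(t)$ — that folding is sound and arguably cleaner. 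You are also more explicit than the paper that non-emptiness of $\mathrm sV(t)\cap\mathcal M(t)$ must be taken as an implicit part of the definition of a basin; the paper never argues non-emptiness, so this is a reasonable reading rather than a gap in your argument.
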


The proof is presented in the Appendix. By the maximum theorem, we can then define the nonempty-compact-valued upper hemicontinuous correspondence,
\[
    K^*: t \in I \mapsto \argmin_{K(t)} f_t,
\]
along with the continuous,
\[
    f^*: t \in I \mapsto \min_{K(t)} f_t.
\]
The definition of $f^*$ will be a benchmark for our ODE solution. If at all time $t \in I$,
\[
    \mathrm sV(t) \cap \mathcal{C}_0(t) = K^*(t),
\]
we say that $V$ is contracting, essentially all the critical points of $V$ are global minima (in $V$).

\begin{proposition}
\label{prop:basin}
If $V$ is a basin over $I$ open, there exists $\epsilon >0$, $\hat f : I \to \mathbb R$ and $\hat K : I \tto \mathbb R^n$ with compact non-empty values, such that at all time $t \in I$ and $x \in K(t)$,
\[
    f(x,t) \leq \hat f(t) \iff x \in \hat K(t),
\]
while,
\[
    \hat K(t) \subset \mathrm sV(t) \cap \mathcal M(t),
\]
and,
\[
    \hat f (t) \geq f^*(t) + \epsilon.
\]
\end{proposition}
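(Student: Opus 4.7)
The plan is to build $\hat f$ and $\hat K$ as a continuous level-set thickening of $K^*$, using Proposition \ref{prop:Kcont} together with the Maximum Theorem and a uniform gap estimate between the minimum of $f_t$ over $K(t)$ and its minimum over the ``outer rim'' $K(t) \setminus \mathrm sV(t)$.

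First, I would introduce the auxiliary correspondence
\[
    K_\partial : t \in I \mapsto K(t) \setminus \mathrm sV(t),
\]
which, at each $t$, is a compact subset of $K(t)$ since $\mathrm sV(t)$ is open and $K(t)$ is compact by Proposition \ref{prop:Kcont}. Upper hemicontinuity of $\overline{\mathrm sV}$ (built into the definition of a basin) together with the continuity of the constraint manifold $\mathcal M$ yields upper hemicontinuity of $K_\partial$ with nonempty-or-empty compact values. Define the boundary-value function $b(t) = \min_{K_\partial(t)} f_t$, with the convention $b(t) = +\infty$ where $K_\partial(t) = \emptyset$. By Berge's Maximum Theorem applied to $f$ and $K_\partial$, the function $b$ is lower semicontinuous on $I$, while $f^{*}$ is continuous.

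Second, I would show $b(t) > f^{*}(t)$ at each $t \in I$. This is where the structure of the basin enters: any $x \in K(t)$ that attains $f_t(x) = f^{*}(t)$ lies in the argmin selection $K^{*}(t)$, which by the contracting property coincides with $\mathrm sV(t) \cap \mathcal C_0(t)$ and therefore lies strictly inside $\mathrm sV(t)$, hence is disjoint from $K_\partial(t)$. Thus the lower semicontinuous gap function $\gamma(t) = b(t) - f^{*}(t)$ is strictly positive on $I$.

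Third, set $\hat f(t) = f^{*}(t) + \epsilon$ for a sufficiently small $\epsilon > 0$, and define
\[
    \hat K(t) = \bigl\{\, x \in K(t) \st f(x,t) \leq \hat f(t)\,\bigr\}.
\]
The biconditional in the proposition is then tautological. Because $\hat f(t) < b(t)$, no $x \in \hat K(t)$ can belong to $K_\partial(t)$, so $\hat K(t) \subset \mathrm sV(t) \cap \mathcal M(t)$. Nonemptiness is immediate from $K^{*}(t) \subset \hat K(t)$, and $\hat K(t)$ is compact as a closed subset of the compact set $K(t)$.

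The delicate step is securing a \emph{single} $\epsilon > 0$ that works uniformly over the open interval $I$: pointwise positivity of $\gamma$ is cheap, but lower semicontinuity alone does not prevent $\gamma$ from collapsing to zero near the endpoints of $I$. To close this gap, I would exploit the local boundedness of $\mathrm sV$ and the upper hemicontinuity of $\overline{\mathrm sV}$ to extend $K$, $K_\partial$ and $f^{*}$ continuously to the closure $\bar I$ (which is compact since $I$ is a finite interval), apply the Maximum Theorem on $\bar I$, and take the infimum of the now lower semicontinuous $\gamma$ over the compact $\bar I$; any $\epsilon$ strictly less than this infimum then works. This endpoint argument is the main obstacle, as it is the only place where the ``continuity'' clauses of the basin definition (upper hemicontinuity of $\overline{\mathrm sV}$ and the regularity identity $\overline{\mathrm sV(t) \cap \mathcal M(t)} = \overline{\mathrm sV(t)} \cap \mathcal M(t)$) are used in full strength rather than pointwise.
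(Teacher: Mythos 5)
Your construction matches the paper's almost step for step: your $K_\partial(t)=K(t)\setminus \mathrm sV(t)$ coincides with the paper's boundary set $\partial(\mathrm sV(t))\cap\mathcal M(t)$ (by the basin's regularity identity $K(t)=\overline{\mathrm sV(t)}\cap\mathcal M(t)$ and openness of $\mathrm sV(t)$), the pointwise positivity of the gap uses the same inclusion $K^*(t)\subset\mathrm sV(t)$, and $\hat K$ is the same sublevel set. Two remarks. First, note that the inclusion $K^*(t)\subset\mathrm sV(t)$ is not a consequence of the ``basin'' hypothesis alone --- it uses the \emph{contracting} property; the paper's own proof silently uses it as well, but you invoke it explicitly, which is the honest reading of the statement. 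Second, and this is the genuine gap: the fix you propose for uniformity is unsound. Local boundedness of $\mathrm sV$ and upper hemicontinuity of $\overline{\mathrm sV}$ are hypotheses \emph{on the open interval $I$}; they carry no information about behaviour at $\partial I$, so there is no way to ``extend $K$, $K_\partial$ and $f^*$ continuously to $\bar I$.'' The gap $\gamma$ can well collapse to $0$ as $t$ approaches an endpoint (e.g. when a critical point migrates onto $\partial(\mathrm sV(t))$, or a bifurcation occurs exactly there), and then no positive $\epsilon$ exists. The paper's proof has the same blemish: it extracts a subsequence with ``$t_n\to\bar t\in I$'' as if $I$ were compact. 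The way both arguments are actually rescued is the application context (Theorem~\ref{thm:ode}), where the basin lives on an open $I_0$ and the proposition is applied on a compact sub-interval $I=[t_0,t_1]\subset I_0$; under that reading the convergent subsequence has its limit in $I$, the lower semicontinuous $\gamma$ attains a positive minimum, and the argument closes. So: replace ``extend to $\bar I$'' by ``restrict to a compact sub-interval of the open interval on which $V$ is a basin'' and your lower-semicontinuity argument (which is a slightly cleaner packaging of the paper's closed-graph-and-subsequence argument) is fine.

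Minor: your $\hat f(t)=f^*(t)+\epsilon$ versus the paper's midpoint $\hat f(t)=\tfrac12\bigl(f^*(t)+b(t)\bigr)$ is a cosmetic difference; either works once uniform positivity of $\gamma$ is secured.
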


Automatically,  $\hat K(t)$ is compact as closed subset of $K(t)$ compact and non-empty as it contains $K^*(t) \neq \emptyset$. This proposition spares us from the assumption that a continuous correspondence of isolated level sets is somehow  given. This sets up a natural structure for proving the main result, much as how the isolated level set was the right structure to prove the convergence of $\psi_0^{t_0}$.

\subsection{The Main Result}

We introduce the flow $\varphi_\alpha$ of \eqref{eq:ode} with initial condition taken at time $t_0$, which is the original ODE of parameter $\alpha$, not time-rescaled. We are interested in (1) showing it is well defined, (2) showing it does track a local solution. 

\begin{definition}[$\epsilon$-tracking within $\nu$-time]
Let $V$ be a contracting basin on some open interval $I_0$, and $I = [t_0, t_1] \subset I_0$.
Let $\epsilon > 0$ be a precision, $\nu >0$ be a reaction time, and $\alpha$ be a momentum parameter. We say that $\alpha$ allows for $\epsilon$-tracking within $\nu$-time when:
\begin{enumerate}
    \item the solution of \eqref{eq:ode}$, \varphi_\alpha$, is defined over $I \times \hat K(t_0)$,
    \item at all times $t \in I$, the solution remains in the basin,
\[
    \varphi^t_\alpha(\hat K(t_0)) \subset K(t),
\]
    \item and for all $t \in [t_0 + \nu, t_1]$ and $x \in \hat K(t_0)$, the solution is a good approximation of the minimum,
\[
    f(\varphi_\alpha^t(x),t) - f^*(t) \leq \epsilon.
\]
\end{enumerate}
\end{definition}

\begin{theorem}[Tracking with Equalities]
\label{thm:ode}
Consider the time-varying problem with equalities \eqref{eq:equalityproblem},
under Assumptions \ref{as:functionclass}, \ref{ass:feaseq} and \ref{ass:cont}. 
Let $V$ be a contracting basin on some open interval $I_0$, and $I = [t_0, t_1] \subset I_0$.
Then, for any precision $\epsilon > 0$ and any reaction time $\nu >0$, there exists $\bar{\alpha} > 0$ such that for all $\alpha \in (0, \bar{\alpha}]$,
$\alpha$ allows for $\epsilon$-tracking within $\nu$-time.
\end{theorem}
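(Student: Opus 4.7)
The strategy is to pass to the time-rescaled formulation \eqref{eq:odescaled} with $s = (t-t_0)/\alpha$, view $\psi^{s,t_0}_\alpha$ as an $\alpha$-perturbation of the autonomous flow $\psi^{s,t_0}_0$ of \eqref{eq:odescaledflat}, and combine Proposition \ref{prop:lyapflat} (uniform Lyapunov descent in the autonomous case) with continuous dependence on the parameter $\alpha$.

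First, I would apply Proposition \ref{prop:basin} at time $t_0$ to extract a contracting jail $\hat K(t_0) \subset \mathrm{s}V(t_0) \cap \mathcal M(t_0)$ with edge $\hat f(t_0) \geq f^*(t_0) + \epsilon_0$ for some $\epsilon_0 > 0$. Because $V$ is contracting, the critical points of $f_{t_0}$ in $\hat K(t_0)$ coincide with $K^*(t_0) = \argmin_{\hat K(t_0)} f_{t_0}$, so $\hat K(t_0)$ meets the contracting-jail hypothesis of Proposition \ref{prop:lyapflat}. That proposition produces a rescaled time $\bar s>0$ such that the autonomous flow $\psi^{s,t_0}_0$ is defined for all $s \geq 0$, stays inside $\hat K(t_0)$, and satisfies $f(\psi^{s,t_0}_0(x),t_0) - f^*(t_0) \leq \epsilon/3$ uniformly over $x \in \hat K(t_0)$ for $s \geq \bar s$.

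Second, I would treat \eqref{eq:odescaled} as an $\alpha$-perturbation of \eqref{eq:odescaledflat} on the \emph{compact} rescaled window $[0, \bar s]$. Smoothness and LICQ imply that the vector field of \eqref{eq:odescaled} is $C^1$ and Lipschitz on a neighborhood of $\hat K(t_0) \times [t_0,t_1]$, and differs from that of \eqref{eq:odescaledflat} by $O(\alpha)$ (both via the argument shift $t_0 \mapsto t_0 + \alpha s$ in $P$ and $\nabla_x f$, and via the drift $-\alpha J^\top(JJ^\top)^{-1}h'$). A Gronwall estimate then gives uniform convergence $\psi^{s,t_0}_\alpha \to \psi^{s,t_0}_0$ on $[0,\bar s] \times \hat K(t_0)$. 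Choosing $\alpha \leq \nu/\bar s$ places the real time $t_0 + \alpha \bar s$ inside $[t_0, t_0 + \nu]$; continuity of $f$ in $t$ and of $f^*$ in $t$ (Proposition \ref{prop:basin}) then yield $f(\varphi_\alpha^t(x),t) - f^*(t) \leq \epsilon/2$ at $t = t_0 + \alpha \bar s$, with $\varphi_\alpha^t(\hat K(t_0)) \subset \hat K(t)$ for all $t \in [t_0, t_0 + \alpha \bar s]$.

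The main obstacle is extending this one-time $\epsilon/2$-tracking to uniform $\epsilon$-tracking over the whole remaining interval $[t_0 + \nu, t_1]$, while simultaneously guaranteeing the confinement $\varphi_\alpha^t(\hat K(t_0)) \subset K(t)$ for all $t \in I$ (which also delivers global existence of the flow by compactness). My plan is a covering argument: by compactness of $I$ and the continuity of $\hat K(\cdot)$, $K^*(\cdot)$ and $f^*(\cdot)$ from Proposition \ref{prop:Kcont} and the maximum theorem, I can select finitely many points $t_0 < \sigma_1 < \cdots < \sigma_N = t_1$ whose consecutive contracting jails overlap and on which $f^*$ varies by less than a prescribed margin. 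At each $\sigma_k$ one re-applies the two-phase argument above with a further shrinkage of $\alpha$; the crucial quantitative input is that inside $\hat K(t) \setminus K^*(t)$ the projected-gradient term $-\alpha^{-1} P \nabla_x f$ produces $f_t$-descent of order $1/\alpha$, which for $\alpha$ sufficiently small dominates the $O(1)$ tangential drift $-J^\top(JJ^\top)^{-1}h'$ together with the $O(1)$ explicit $t$-variation of $f$. This keeps the trajectory strictly inside the moving jail $\hat K(t)$, preserves the Lyapunov-attraction structure across each subinterval, and re-establishes the $\epsilon$-bound inductively. Since the cover is finite, the smallest $\alpha$ required across all sub-stages is still strictly positive, which supplies the claimed $\bar \alpha$.
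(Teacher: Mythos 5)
Your proposal matches the paper's overall architecture: time-rescale via $s=(t-t_0)/\alpha$, apply the autonomous Lyapunov result (Proposition~\ref{prop:lyapflat}) to get a descent time $\bar s$, use a Gronwall estimate for the continuous dependence on $\alpha$, and then propagate forward through the interval $I$. The main structural difference is where the uniformity in $t$ comes from. The paper establishes a \emph{single} $\bar s$ and $\bar\alpha$ valid for all base times $t\in I$ \emph{before} any propagation: it shows in Part~2 that $K\setminus K^\epsilon$ has closed graph (hence is upper hemicontinuous), applies the maximum theorem to obtain a positive lower bound $\lambda$ on the descent rate uniformly over $t\in I$, and hence a single $\bar s=R/\lambda$; the Gronwall constants $A,\Lambda$ are likewise taken as $\max$'s over $I$. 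The patching step then just slides the rescaled window $[0,\bar s]$ forward by $\alpha\bar s$ as many times as needed, with a fixed $\bar\alpha$. You instead do the Lyapunov and Gronwall analysis at $t_0$ only and propose a finite open cover $t_0<\sigma_1<\dots<\sigma_N$ with a further shrinkage of $\alpha$ at each stage. This can be made to work, but it re-derives $\bar s_k$ and a local $\alpha$-bound at each $\sigma_k$ and then takes a minimum, which is strictly more bookkeeping than the paper's ``uniformize first, iterate second'' route.

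The genuine gap is the confinement step. You assert that the $O(1/\alpha)$ descent of $-\alpha^{-1}P\nabla_x f$ ``dominates'' the $O(1)$ drift and $t$-variation and therefore ``keeps the trajectory strictly inside the moving jail $\hat K(t)$,'' but this is exactly the point that needs a careful argument, and the quantitative heuristic does not supply it: the jail $\hat K(t)$ is a moving sublevel set whose edge $\hat f(t)$ can decrease in $t$, and the claim that a trajectory which enters $\hat K(t)$ cannot leave it is a topological statement, not a rate comparison. The paper isolates this as the \emph{prison argument} (Part~4 of its proof): if $\phi$ is continuous, $h(\phi(t),t)=0$, $f(\phi(t),t)-f^*(t)\le 3\epsilon < \hat f(t)-f^*(t)$, and $\phi(t_0)\in K(t_0)$, then $\phi(t)\in\hat K(t)$ for all $t$ --- proved by contradiction using the upper hemicontinuity of $\hat K$ and the fact that $\phi$ cannot touch $\partial(\mathrm{s}V(t))$ while its value is below $\hat f(t)$. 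You should state and prove this invariance explicitly; without it, the inductive covering step has no mechanism to guarantee $\varphi_\alpha^t(\hat K(t_0))\subset K(t)$ on the intervals between your cover points. Relatedly, your proposal silently assumes the trajectory stays on $\mathcal M(t)$; the paper verifies $\frac{\mathrm d}{\mathrm dt}h(\varphi_\alpha^t(x),t)=0$ by direct computation from \eqref{eq:ode}, and this is a hypothesis of the prison argument, so it should appear in your proof as well.
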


The complete proof is available in the Appendix.
It relies on making previous statements global and applying them to \eqref{eq:odescaledparam}. 

Now, let us present the main result:


\begin{theorem}[Tracking with Inequalities]
Consider the time-varying problem with inequalities \eqref{eq:inequalityproblem}, 
under Assumptions \ref{as:functionclassineq},  \ref{ass:feasineq}, and \ref{ass:contineq}.
Let $V$ be a contracting basin on some open interval $I_0$, and $I = [t_0, t_1] \subset I_0$.
Then, for any precision $\epsilon > 0$ and any reaction time $\nu >0$, there exists $\bar{\alpha} > 0$ such that for all $\alpha \in (0, \bar{\alpha}]$,
$\alpha$ allows for $\epsilon$-tracking within $\nu$-time.
\end{theorem}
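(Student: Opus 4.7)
The plan is to reduce the inequality-constrained problem \eqref{eq:inequalityproblem} to an equality-constrained one via slack variables and then invoke Theorem \ref{thm:ode}. I introduce $y = (x, s) \in \mathbb{R}^{n+q}$ with slack $s \in \mathbb{R}^q$ and set
\[
\tilde h(y, t) \;=\; \begin{pmatrix} h(x,t) \\ g(x,t) + s \odot s \end{pmatrix}, \qquad \tilde f(y,t) \;=\; f(x,t),
\]
where $s \odot s = (s_1^2, \dots, s_q^2)^\top$. A point $x$ is feasible for \eqref{eq:inequalityproblem} at time $t$ if and only if $(x, s)$ is feasible for the equality problem with $s_i = \pm\sqrt{-g_i(x,t)}$, and local minima correspond up to the sign ambiguity on inactive slacks.

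The key verification is that the augmented problem satisfies Assumptions \ref{as:functionclass}, \ref{ass:feaseq}, and \ref{ass:cont}. Smoothness and feasibility transfer immediately. For LICQ, the augmented Jacobian has the block form
\[
\mathcal{J}_{\tilde h}(y, t) = \begin{pmatrix} \mathcal{J}_h(x, t) & 0 \\ \mathcal{J}_g(x, t) & 2\,\Diag(s) \end{pmatrix}.
\]
Any linear dependence among its rows forces the coefficient of each $g_i$ row to vanish on inactive indices (where $s_i \neq 0$); what remains is a dependence among the rows of $\mathcal{J}_h$ and the active rows of $\mathcal{J}_g$, ruled out by Assumption \ref{ass:contineq}. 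The main obstacle is coercivity, since $\tilde f$ does not depend on $s$. However, what the equality-case argument actually uses, through Lemma \ref{lem:lowerlevel}, is compactness of lower level sets of $\tilde f$ on the augmented constraint manifold. On that manifold $\|s\|^2 = -\sum_i g_i(x,t)$, so coercivity of $f_t$ bounds $x$ on any lower level set and continuity of $g$ then bounds $s$, yielding compactness. Every ingredient downstream (Proposition \ref{prop:lyap}, the flow bounds, Proposition \ref{prop:Kcont}) carries over unchanged.

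Next, I lift the contracting basin $V$ to the augmented space by taking
\[
\tilde V \;=\; \bigl\{(t, x, s) \in I_0 \times \mathbb{R}^{n+q} : (t,x) \in V, \; g(x,t) + s \odot s = 0\bigr\}
\]
restricted to the connected component selected by the sign pattern of $s$ at the initial point. Continuity of the square root away from active constraints, combined with the implicit function theorem at transitions where an inequality becomes active (guaranteed by the augmented LICQ above), shows that the slicing $\mathrm s \tilde V$ is locally bounded, upper hemicontinuous in closure, and regular in the sense required of a basin. Under contractibility of $V$, the corresponding augmented $\tilde K^*$ and $\tilde f^*$ project onto the original $K^*$ and $f^*$, so $\tilde V$ is contracting.

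Applying Theorem \ref{thm:ode} to the augmented problem on $I \subset I_0$ yields a threshold $\bar\alpha > 0$ such that for all $\alpha \in (0, \bar\alpha]$ the augmented flow $\tilde \varphi_\alpha$ is defined on $I \times \hat{\tilde K}(t_0)$, remains in $\tilde K(t)$, and $\epsilon$-tracks $\tilde f^*$ after time $\nu$. Projecting onto the $x$-component gives the claim for \eqref{eq:inequalityproblem}, because the block inverse of $\mathcal{J}_{\tilde h}\mathcal{J}_{\tilde h}^\top$ reduces the $x$-component of the augmented ODE to precisely \eqref{eq:odeineq}, with the projector $P$ and operator $\theta$ evaluated on the active-set stratum singled out by the sign pattern of $s$.
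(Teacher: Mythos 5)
Your approach is the same slack-variable reduction the paper uses: introduce $z_j^2$ (your $s_j^2$), pass to the augmented equality problem \eqref{eq:equalityproblem}, and invoke Theorem~\ref{thm:ode}. The paper's own proof is essentially those two sentences plus a citation to Bertsekas for the correspondence of local solutions; you have done the work of actually checking that the hypotheses of Theorem~\ref{thm:ode} transfer, and in doing so you correctly flag the real subtlety that the paper glosses over: Assumption~\ref{as:functionclass} literally demands that $\tilde f_{t}$ be coercive, which it cannot be since $\tilde f$ is independent of the slack coordinates. Your observation that what is actually used downstream, via Lemma~\ref{lem:lowerlevel}, is only compactness of lower level sets \emph{intersected with the constraint manifold} --- and that $\|s\|^2 = -\sum_i g_i(x,t)$ recovers this --- is exactly the right repair, and is not spelled out in the paper. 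Your verification of the augmented LICQ through the block Jacobian is also what Bertsekas's result implicitly requires, and is worth writing out.

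However, your lift of the contracting basin has a gap that would need to be patched. As you have defined it,
\[
\tilde V = \bigl\{(t,x,s) : (t,x)\in V,\; g(x,t)+s\odot s = 0\bigr\}
\]
is a codimension-$q$ subset of $I_0 \times \mathbb{R}^{n+q}$, hence has empty interior and cannot be a basin (the definition requires $V$ to be \emph{open}). You need an open thickening, e.g.\ $\{(t,x,s) : (t,x)\in V,\ \|g(x,t)+s\odot s\|<\delta\}$, whose slicing is still locally bounded because $x$ stays bounded in $V$ and then $\|s\|^2 \le \|g(x,t)\| + \delta$. Separately, ``the connected component selected by the sign pattern of $s$'' is not a stable notion across the interval $I$: when a constraint switches between active and inactive, the branches $s_j = \pm\sqrt{-g_j}$ merge at $s_j=0$ and split again, so the component can change; your appeal to the implicit function theorem at these transitions does not by itself resolve which component the basin should follow. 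The simplest fix is not to select a component at all --- take the full thickened $\tilde V$ and note that all sign-mirrored critical points project to the same $x$ and the same $f$-value, so $\tilde f^*$ still agrees with $f^*$ and the contracting condition for $\tilde V$ follows from that of $V$. With these two repairs your argument is complete and strictly more informative than the paper's.
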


\begin{proof}
Note that the time-varying optimization program with inequalities \eqref{eq:inequalityproblem} can be reformulated using slack variables $z \in \mathbb{R}^q$:
\begin{align}
\label{eq:equalityproblem}
	\inf_{x\in \mathbb{R}^n, \ z \in \mathbb{R}^q} &  ~ f(x,t)\\ 
	\notag
	\text{ s.t. } & h_i(x) = d_i(t), \quad i = 1,\hdots,p \\
	\notag 
	& g_j(x) + z_j^2 = e_j(t), \quad j = 1,\hdots,q.
\end{align}
It is well-known \cite[Section 3.2.2]{Bertsekas/99} that for $x \in \mathbb{R}^n$, $x$ is a local solution of \eqref{eq:inequalityproblem} if and only if there exists $z \in \mathbb{R}^q$ such that $[x~z]^\top$ is a solution of \eqref{eq:equalityproblem}. 
The structural results on  \eqref{eq:equalityproblem} can thereby get extended to problems with inequalities.
\end{proof}

The nature of this result is global in time as long as we are away from bifurcations and the isolated time-discontinuities which may occur. In turn, if the problem is discontinuous at isolated times, we may apply our result in between them.

\section{Algorithms}

Algorithm \ref{alg:ONCCO}
presents the schema of our algorithmic approach. It has two crucial components:
\begin{itemize}
\item a reformulation of our continuous-time model so as to avoid the use of matrix inversion, 
\item the ODE solver, which can be seen as an predictor-corrector method. 
\end{itemize} 
Let us introduce them in turn.

\subsection{A Reformulation}

Any model we consider, e.g.  \eqref{eq:odeineq}, is based on a system of first-order optimality conditions, which are a system of linear or non-linear equations, i.e. in the static case:
\begin{align}
\label{Rammoriginal}
F(u) = 0
\end{align}
for some spaces $X, Y$ and a map $F: X \to Y$. Let us denote the derivative of $F$ by $F'$. We wish to solve the system of equations \eqref{Rammoriginal}, which could be done using the continuous-time Newton method:
\begin{align}
\label{RammNewton}
\dot u &= -[F'(u)]^{-1} F(u)  \\
 u(0) &= u_0 \notag
\end{align}
for finding the zeros of $F(u)$. This, however, involves matrix inversion, which is computationally expensive. We would like to use, again in the static case,
\[
    F(x) = P \nabla f(x).
\]
The computation of $[F'(x)]^{-1}$ involves the inverse of the Hessian $\nabla^2 f(x)$. But even for a sparse Hessian, the inverse can be fully dense. 


Instead, we can consider a higher-dimensional problem in $(u, v)$:
\begin{align}
\label{Rammapprox}
    \dot u & = - v \\
    u(0) & = u_0 \notag \\
    \dot v & = \rho [F(u) - F'(u) v] \notag  \\
    v(0) & = v_0 \notag 
\end{align}
where $\rho$ is a scalar constant.
This \eqref{Rammapprox} may seem heuristic at first, but as presented in the appendix, 
one 
shows that under mild assumptions, including bounded Fr\'echet derivatives of $F$,
the lifted problem \eqref{Rammapprox} has a unique global solution, 
i.e., 
\begin{align}
    u(t) \to_t x \\
    v(t) \to_t (F'(x))^{-1}F(x),
\end{align}
for some $x \in X$ root of $F$.

This justifies the use of the much simpler matrix-vector multiplication, instead of matrix inversion in the time-invariant case. We will see how to adapt this method to our on-line problem.


\subsection{Solvers for the ODE}

There are a variety of methods for solving the ODE \eqref{Rammapprox}. A Runge-Kutta of order four is a common choice considering:
\begin{align}
\label{eq:corrector}
u_{n+1}&=u_{n}+{\tfrac {1}{6}}\left(k_{1}+2k_{2}+2k_{3}+k_{4}\right),
\end{align}
wherein the predictors for a step-size $h > 0$ are:
\begin{align}
\label{eq:predictor1}
k_{1}&=h\ F(t_{n}, u_{n}),\\
k_{2}&=h\ F\left(t_{n}+{\frac {h}{2}}, u_{n}+{\frac {k_{1}}{2}}\right),\\
k_{3}&=h\ F\left(t_{n}+{\frac {h}{2}},u_{n}+{\frac {k_{2}}{2}}\right),\\
k_{4}&=h\ F\left(t_{n}+h, u_{n}+k_{3}\right).
\label{eq:predictor4}
\end{align}

With the fourth-order Runge-Kutta method, each iteration hence requires 4 evaluations of $F(u)$ and four evaluations of the gradient $F'(u)$ and a minimal amount of arithmetic.

\subsection{Our Algorithm}

We rewrite our ODE to avoid any matrix inversion or multiplication. We propose the following ODE as a replacement of \eqref{eq:ode},
\begin{align}
    \dot x =&~ - \frac{1}{\alpha} \nabla_x f(x(t),t) + \frac{1}{\alpha} J^\top u - J^\top v \label{eq:odealgo} \tag{ODE$^\circ$} \\
    \dot u =&~ \rho (J \nabla_x f(x(t), t) - J J^\top u) \notag \\
    \dot v =&~ \rho (h'(x(t),t) - J J^\top v) \notag
\end{align}
In the limit of $\rho > 0$ going to infinity, with a time rescaling $s = \rho(t-t_0)$, we find the limit ODE in $s$,
\begin{align}
    \dot x =&~ 0 \label{eq:odealgoflat} \tag{ODE$^\circ_0$} \\
    \dot u =&~ J \nabla_x f(x(t_0), t) - J J^\top u \notag \\
    \dot v =&~ h'(x(t_0),t_0) - J J^\top v \notag
\end{align}
Since $JJ^\top \succ 0$, we have, with $J$ is taken at $(x(t_0), t_0)$,
\begin{align}
    u(s) &\to_s (J J^\top)^{-1} J \nabla_x f(x(t_0), t_0) \\
    v(s) &\to_s (J J^\top)^{-1} h'(x(t_0),t_0).
\end{align}
We can show that for $\rho$ great enough the solution $x$ of \eqref{eq:odealgo} approximates (in objective) the solution of \eqref{eq:ode}.

For simplicity and practicality, we may dissociate updates of $u,v$ from those of $x$. We run the update of $u,v$ until a certain acceptable error margin is reached, then update $x$ by one step. The cost per update of $x$ amounts to roughly $20$ computations of $J$ (depending on the number of updates of $u,v$) and for the rest, some matrix vector multiplications.


\begin{algorithm}[tb!]
   \caption{onnccoo: On-Line Non-Convex Constrained Optimization}
   \label{alg:ONCCO}
\begin{algorithmic}[1]
   \STATE {\bfseries Input:} Initial point close to global optimum, threshold $\theta$, small momentum $\alpha$.
   \FOR {$n = 1$ {\bfseries to}  $\infty$}
   \STATE  Update $u,v$ until threshold $\theta$ is reached, using the predictor-corrector method, e.g., using (\ref{eq:predictor1}--\ref{eq:predictor4}) for the fourth-order Runge-Kutta method
   \STATE  Update $x$ of one step using the predictor-corrector method, e.g., using (\ref{eq:predictor1}--\ref{eq:predictor4}) for the fourth-order Runge-Kutta method
   \ENDFOR
\end{algorithmic}
\end{algorithm}

\begin{figure}[t!]
        \includegraphics[page=28, width=0.99\textwidth,clip]{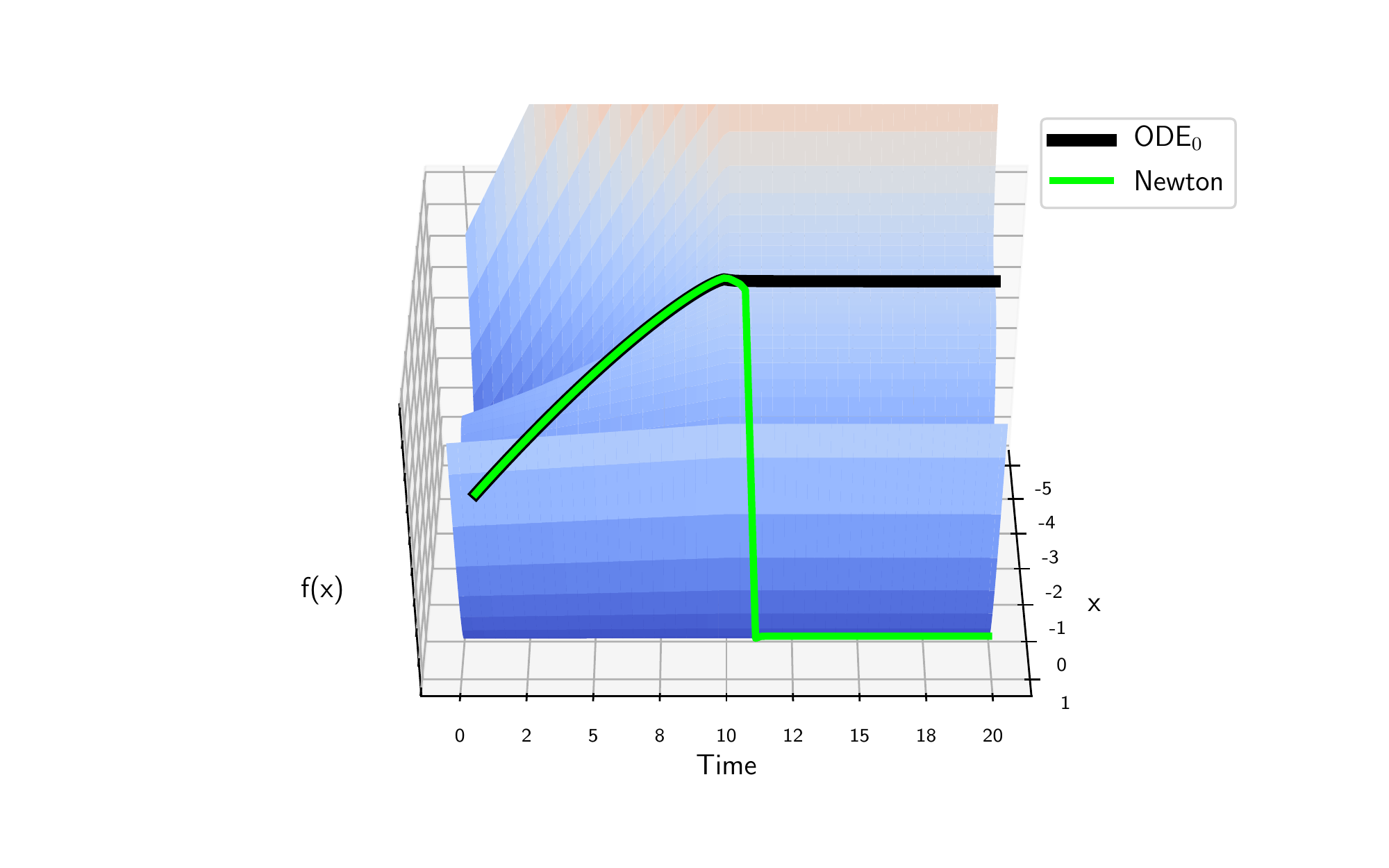}
\caption{The trajectories of ODE$_0$ and the continuous Newton method starting from the same initial point $x_0 = -4$ on \eqref{ex:simple}.}\label{fig1right}
\vspace{-0.5em}
\end{figure}
\subsection{A Computational Illustration}
\label{sec:exp}

Consider a very simple univariate time-varying function:

\begin{example}
\begin{align}
f(x, t) :=  x^4 & + 8x^3 + 18x^2 \label{ex:simple}  \\
+ & \begin{cases}
- 2x^2 (10 -t)/10 & \textrm{ if } t \le 10 \\
0 & \textrm{ otherwise}
\end{cases}
\notag 
\end{align}
\end{example}

Figure \ref{fig1right} 
presents the evolution of ODE$_0$ and the continuous Newton method over time, starting at $x_0 = -4$.
Clearly, while the ODE$_0$ does not escape a saddle point at $x = -3$, Newton method does reach an optimum at $x_T = 0$.

\section{Conclusions} 

We have presented some of the first structural results for on-line non-convex optimization. Based on these results, we have derived an algorithm, whose per-iteration complexity is substantially lower than per-iteration complexity of a gradient method on the constraint non-convex problem, but whose use allows for tracking of local optima, under mild assumptions. 

Further work should include tuning of the various constants intervening in the algorithm, and implementing the algorithm on a large instance. We hope our first results will spur further research in on-line non-convex constrained optimization.



\bibliography{refs,completion,pursuit,time-varying,calculus}

\appendix
\newpage 

\newpage

\section{A note on jails, and the Lyapunov argument}

\begin{lemma}
If $A,B$ are two disjoint closed sets, then there exists an open $V$ such that $A \subset V$ and $\bar{V} \cap B = \emptyset$.
\end{lemma}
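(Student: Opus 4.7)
The plan is to use the classical construction via distance-to-set functions, which is exactly what gives normality of metric spaces. For a nonempty set $S \subset \mathbb{R}^n$, recall that $d(x,S) \triangleq \inf_{s \in S}\|x-s\|$ is $1$-Lipschitz (hence continuous) and vanishes precisely on $\bar{S}$.

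First I would dispatch the degenerate cases: if $A = \emptyset$ take $V = \emptyset$, and if $B = \emptyset$ take $V = \mathbb{R}^n$; both satisfy the conclusion vacuously. Assume henceforth that $A$ and $B$ are both nonempty. Since they are disjoint and closed, for every $x \in \mathbb{R}^n$ at most one of $d(x,A)$ and $d(x,B)$ can be zero (otherwise $x \in \bar{A} \cap \bar{B} = A \cap B = \emptyset$), hence $d(x,A) + d(x,B) > 0$ everywhere. This lets me define
\[
\phi(x) \;=\; \frac{d(x,A)}{d(x,A) + d(x,B)},
\]
a continuous function $\mathbb{R}^n \to [0,1]$ with $\phi \equiv 0$ on $A$ and $\phi \equiv 1$ on $B$.

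I would then take $V = \phi^{-1}\bigl([0,\tfrac{1}{2})\bigr)$. Openness is immediate from continuity of $\phi$, and $A \subset V$ since $\phi$ vanishes on $A$. For the last condition, note that $\phi^{-1}\bigl([0,\tfrac{1}{2}]\bigr)$ is closed and contains $V$, so $\bar{V} \subset \phi^{-1}\bigl([0,\tfrac{1}{2}]\bigr)$; but $\phi = 1$ on $B$, so $B \cap \phi^{-1}\bigl([0,\tfrac{1}{2}]\bigr) = \emptyset$, and therefore $\bar{V} \cap B = \emptyset$.

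There is no real obstacle here; the only subtle point is checking that the denominator of $\phi$ never vanishes, which is exactly where the hypothesis that $A, B$ are closed \emph{and} disjoint is used. A more direct variant, avoiding the quotient, would be to set $V = \{x : d(x,A) < d(x,B)\}$ and argue by sequences that any $b \in \bar{V} \cap B$ would satisfy $d(b,A) \leq d(b,B) = 0$, forcing $b \in A$ and contradicting disjointness; either formulation suffices, and the proof is purely topological in $\mathbb{R}^n$.
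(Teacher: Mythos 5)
Your proof is correct, and it takes a genuinely different route from the paper's. The paper covers $A$ by open neighbourhoods $V_a$ disjoint from $B$ and sets $V=\bigcup_{a\in A}V_a$; as written that argument only establishes $V\cap B=\emptyset$, not the stronger $\bar{V}\cap B=\emptyset$ that the lemma asserts (e.g.\ with $A=\{0\}$, $B=\{1\}$ in $\mathbb{R}$, the perfectly legitimate choice $V_0=(-1,1)$ yields $\bar V\cap B=\{1\}$). To make the union-of-balls argument work one must commit to radii like $r_a=\tfrac{1}{2}d(a,B)$ and then run a sequence argument showing a limit in $\bar V\cap B$ would force a point of $A$ into $B$. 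Your Urysohn-function construction $\phi(x)=d(x,A)/(d(x,A)+d(x,B))$ sidesteps this entirely: $\bar V\subset\phi^{-1}\left(\left[0,\tfrac12\right]\right)$ is immediate from continuity, so the closure condition comes for free, and you correctly isolate the one place the hypotheses are used (positivity of the denominator) and handle the empty-set degeneracies. Your alternative $V=\{x:d(x,A)<d(x,B)\}$ is an equally clean variant. In short, your argument is tighter than the paper's and actually proves the stated conclusion.
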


\begin{proof}
For $a \in A$, there exists an open neighbourhood $V_a$ of $a$ that does not intersect $B$. Indeed otherwise, by closure of $B$, $a \in B$ so that $A \cap B \neq \emptyset$. Then,
\[
    V = \bigcup_{a \in A} V_a,
\]
is open as union of open sets, contains $A$ yet does not intersect $B$.
\end{proof}

\begin{proof}[Proof of Lemma \ref{lem:lowerlevel}]
For $c \in \mathbb{R}$, surely $(f,h)^{-1}((-\infty, c] \times \{0\})$ is closed as continuous preimage of a closed set. If it were unbounded, there would exist $(x_n)_{n \in \mathbb{N}}$ unbounded such that $f(x_n) \leq c$, contradicting the coercivity of $f$, thus the lower level set (under constraint $h=0$) $(f,h)^{-1}((-\infty, c] \times \{0\})$ is compact.
\end{proof}

From the first lemma, if $K$ is a jail (with level set $L$), then as $K$ and $L \setminus K$ are disjoint closed sets, there exists an open $V$ such that $K \subset V$ while $\bar V \cap (L \setminus K) = \emptyset$, and thus $\bar V \cap L = K$. We can now simply invoke such $V$ given a jail $K$. From the above lemma, we see that $L$ is compact, thus so is $K$ as soon as it is closed.

To prove proposition \ref{prop:lyap} we need to first introduce couple standard lemmas on dynamical systems.

\begin{lemma}[Escape dilemma]
\label{lem:escape}
Let $K \subset \mathbb{R}^n$ be nonempty compact. Let $g : \mathbb{R}^n \to \mathbb{R}^n$ be locally Lipschitz continuous and $x_0 \in K$. Consider the following Cauchy problem,
\begin{equation}
    \label{eq:generalode1}
    \dot{x} = g(x), ~x(0) = x_0.
\end{equation}
By Picard-Lindel\"of theorem, there exists a unique maximal solution to \eqref{eq:generalode1}, $(I,x)$. We refer to $I_+ \triangleq I \cap \mathbb{R}_+$. Then, either $\sup I = \infty$, the solution is eternal, or $x(I_+) \subsetneq K$, the solution leaves $K$.
\end{lemma}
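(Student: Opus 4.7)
The plan is to prove the standard ODE extension dichotomy by contradiction: assume the solution is neither eternal nor leaves $K$, and derive a contradiction with the maximality of $(I,x)$ asserted by Picard--Lindel\"of.

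First I would suppose, for contradiction, that $T \triangleq \sup I < \infty$ while $x(I_+) \subset K$. Since $g$ is locally Lipschitz it is continuous, and continuity on the compact set $K$ gives $M \triangleq \sup_{y \in K} \|g(y)\| < \infty$. Therefore $\|\dot{x}(t)\| = \|g(x(t))\| \leq M$ for all $t \in I_+$, so $x$ is $M$-Lipschitz on $I_+$. This is the key a priori bound.

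Next I would use Lipschitzness to show $x$ extends continuously to the endpoint $T$. Indeed, for any sequence $t_n \uparrow T$ in $I_+$, the Lipschitz estimate gives $\|x(t_n) - x(t_m)\| \leq M |t_n - t_m|$, so $(x(t_n))_n$ is Cauchy; the limit is independent of the sequence, so $x^* \triangleq \lim_{t \to T^-} x(t)$ exists. Since $x(I_+) \subset K$ and $K$ is closed (as a compact subset of $\mathbb{R}^n$), we have $x^* \in K$.

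Now comes the gluing step which, although standard, is the only place requiring some care. By Picard--Lindel\"of applied at the initial condition $y(T) = x^*$, there exist $\delta > 0$ and a unique $C^1$ solution $y : [T - \delta, T + \delta] \to \mathbb{R}^n$ of $\dot{y} = g(y)$ with $y(T) = x^*$. Define
\[
    \tilde{x}(t) = \begin{cases} x(t) & t \in I_+, \\ y(t) & t \in [T, T + \delta]. \end{cases}
\]
Both pieces satisfy the ODE on their respective open domains; by construction $\tilde{x}$ is continuous at $T$ (the left limit of $x$ equals $x^* = y(T)$), and the left and right derivatives at $T$ both equal $g(x^*)$, so $\tilde{x}$ is $C^1$ on $I \cup [T, T + \delta)$ and solves the Cauchy problem on this strictly larger interval. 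By uniqueness (Picard--Lindel\"of again) this contradicts the maximality of $(I, x)$. Hence the initial assumption fails, which proves the dichotomy: either $\sup I = \infty$, or there exists $t \in I_+$ with $x(t) \notin K$, i.e., the solution leaves $K$.

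I do not anticipate a genuine obstacle; the only subtlety is ensuring the glued function $\tilde{x}$ is indeed differentiable at the junction $T$, which follows because $\dot{\tilde{x}}(T^-) = \lim_{t \to T^-} g(x(t)) = g(x^*) = \dot{y}(T) = \dot{\tilde{x}}(T^+)$ by continuity of $g$.
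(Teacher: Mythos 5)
Your proposal is correct and follows essentially the same route as the paper's proof: bound $g$ on the compact $K$ to get an a priori Lipschitz/Cauchy estimate, conclude the solution has a limit $x^* \in K$ at the finite endpoint, and then glue with the local Picard--Lindel\"of solution started at $x^*$ to contradict maximality. The only cosmetic difference is that the paper extracts a convergent subsequence first and then shows the full limit exists, whereas you argue Cauchy directly, and you additionally spell out differentiability at the junction, which the paper leaves implicit.
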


\begin{proof}[Proof of Lemma \ref{lem:escape}]
Assume that $\sup I = a < \infty$ and $x(I_+) \subset K$. We will prolong $(I,x)$, contradicting its maximality.
\medskip

Let $(a_n)_{n \in \mathbb{N}}$ be a sequence of $I$ such that $a_n \to_n a$, by compactness of $K$, there must be a subsequence of $(a_n)$, say $(b_n)_{n \in \mathbb{N}}$, such that $x(b_n) \to_n \tilde{x} \in K$ some limit. Then for any sequence $(c_n)_{n \in \mathbb{N}}$ of $I$ such that $c_n \to_n a$,
\begin{align*}
    \| x(c_n) - x(a) \|
    & \leq \| x(c_n) - x(b_n) \| + \| x(b_n) - x(a) \| \\
    & \leq \max_K \| g \| |c_n-b_n| + \| x(b_n) - x(a) \| \\
    & \to_n 0,
\end{align*}
so that,
\[
    x \to_a \tilde{x}.
\]
Define now $(J, y)$ the maximal solution of \eqref{eq:odescaledflat} with initial condition $y(a) = \tilde{x}$, and,
\[
    z : t \in I \cup J \mapsto \begin{cases}
    x(t) &\text{if } t \in I, \\
    y(t) &\text{otherwise.}
    \end{cases}
\]
Surely $(I \cup J, z)$ is a solution to \eqref{eq:odescaledflat} with initial condition $z(0) = \bar{x}$, which \emph{strictly} prolongs $(I,x)$, so that $(I,x)$ is not maximal, which is absurd. We deduce that the solution is defined for all positive times.
\end{proof}

\begin{proof}[Proof of Proposition \ref{prop:lyap}]
The proofs consists in two parts. First, realising that $f$ decreases along solutions we conclude that a solution starting in $K$ cannot leave $K$. With the escape dilemma, this shows that the solution exists for all positive times. Second, either $x(0)$ is already close enough to the minimum, or it belongs to a set where $f$ decreases steadily, $x$ cannot stay in this set and thus gets close to the minimum.

\proofpart{First two points}
The first point is a direct consequence of the escape dilemma (lemma \ref{lem:escape}). Let 
$K$ be a jail 
of a lower level set $L$, isolated by the open set $V$ (namely $L \cap V = K$). 
Let $x_0 \in K$ and $(I,x)$ be a maximal solution of \eqref{eq:lyapode} with initial condition $x(0) = x_0$. Surely,
\begin{align*}
        \frac{\mathrm{d}}{\mathrm{d} s} f(x(s)) = \nabla f(x(s))^\top g(x(s)) &\leq 0,\\
        \frac{\mathrm{d}}{\mathrm{d} s} h(x(s)) = \nabla h(x(s))^\top g(x(s)) &= 0,
\end{align*}
thus $f \circ x$ decreases while $h \circ x = 0$, hence $x$ cannot leave $L$ compact (lemma \ref{lem:lowerlevel}) in positive times, thus must be defined for all positive times (lemma \ref{lem:escape}).

Suppose $x$ were to leave $K$, and define then,
\[
t_0 = \inf_{t \geq 0, ~x(t) \not \in K} t.
\]
By continuity of $x$ and closedness of $K$, we have $x(t_0) \in K$, thus there exits $t_1 > t_0$ such that $x(t_1) \in V \setminus K$. But as $L \cap (V \setminus K) = \emptyset$, $x(t_1) \not \in L$, this is absurd. Therefore, $x$ remains in $K$. We call $\varphi$ the flow of~\eqref{eq:lyapode}.

\proofpart{Last point}
Define,
\[
    K^* \triangleq \argmin_K f \textrm{ and } f^* \triangleq \min_K f.
\]
Call $c$ the edge of $K$ and let $\epsilon > 0$ be such that $f^* + \epsilon < c$. We define,
\[
K^\epsilon \triangleq \{x \in K, ~f(x)-f^* < \epsilon\},
\]
so that $K^* \subsetneq K^\epsilon \subsetneq K$. Define,
\[
    \lambda = - \max_{K \setminus K^\epsilon} \nabla f^\top g > 0,
\]
positive since there is no critical point on the compact $K \setminus K^\epsilon$, as,
\[
(K \setminus K^\epsilon) \cap \mathcal{C} = (K \cap \mathcal{C}) \setminus K^\epsilon = K^* \setminus K^\epsilon = \emptyset.
\]
Define also,
\[
    R = \max_K f - f^* \geq 0,
\]
and the time,
\[
    T = \nicefrac{R}{\lambda}.
\]

Let $x_0 \in K^\epsilon \subset K$, we know then that $\varphi^t(x_0)$ exists and belong to $K$ for all $t \geq 0$. Furthermore, $f \circ \varphi(x_0)$ is nonincreasing, thus $f(\varphi^t(x_0)) < f^* + \epsilon$ for all $t \geq 0$.

Let now $x_0 \in K \setminus K^\epsilon \subset K$ compact, we know then that $\varphi^t(x_0)$ exists and belong to $K$ for all $t \geq 0$. We also know from the previous argument that if $f(\varphi^t(x_0)) < f^* + \epsilon$ for some $t \geq 0$, then $f(\varphi^s(x_0)) < f^* + \epsilon$ for all $s \geq t$. But there must exist such a $t \in [0, T]$, otherwise,
\begin{align*}
    f(\varphi^T(x_0)) &= f(x_0) + \int_0^T \nabla f^\top g (\varphi^t(x_0)) \mathrm dt \\
    &\leq \max_K f - T \lambda = f^*,
\end{align*}
which is a contradiction.

As a result, no matter $x_0 \in K$, for all $t \geq T$, $\varphi^t(x_0) \in K^\epsilon$, or shorter, for all $t \geq T$, $\varphi^t(K) \subset K^\epsilon$.
\end{proof}

\clearpage

\section{On the continuity of $K$ and existence of $\hat K$}

\subsection{Lower hemicontinuity of $K$}

\begin{lemma}
Let $h : U \times \mathbb{R}^n \to \mathbb{R}^m$ be a continuously differentiable function, where $U$ is an open of $\mathbb{R}^p$, to be understood as the space of parameters. Denote $h_u = h(u, \cdot)$. Assume that for all $(u,x) \in U \times \mathbb{R}^n$ such that $h(u,x) = 0$, the partial differential with respect to the second variable,
\[
y \mapsto \mathrm{d}h(u,x)(0,y) = \mathrm{d}h_u(x)(y),
\]
is surjective, namely all roots of $h$ are regular. Define,
\[
\mathcal{M} : u \in U \mapsto h_u^{-1}(\{0\}) = \{x \in \mathbb{R}^n, ~h(u,x)=0\},
\]
then, $\mathcal{M}$ is lower hemicontinuous.
\end{lemma}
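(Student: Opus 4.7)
My plan is to prove lower hemicontinuity by constructing, around any reference point $x_0 \in \mathcal{M}(u_0)$, a continuous local section $\sigma : W \to \mathbb{R}^n$ of the correspondence, i.e., a continuous map defined on a neighborhood $W$ of $u_0$ satisfying $\sigma(u_0) = x_0$ and $\sigma(u) \in \mathcal{M}(u)$. The existence of such a section immediately yields lower hemicontinuity: for any open $V \ni x_0$, continuity of $\sigma$ shrinks $W$ so that $\sigma(W) \subset V$, whence $\mathcal{M}(u) \cap V \ni \sigma(u)$ for all $u$ in this shrunken neighborhood.

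The main issue is that the standard implicit function theorem requires a partial derivative to be an isomorphism, not merely a surjection, which is typically the case here ($n > m$). The fix is to restrict $x$ to an affine subspace of the right dimension. Concretely, since $A \triangleq \mathrm d h_{u_0}(x_0) : \mathbb{R}^n \to \mathbb{R}^m$ is surjective, it admits a right inverse $B : \mathbb{R}^m \to \mathbb{R}^n$ (e.g.\ $B = A^\top (A A^\top)^{-1}$). I then define
\[
    G : U \times \mathbb{R}^m \to \mathbb{R}^m, \qquad G(u, y) = h(u, x_0 + B y).
\]
By the chain rule, $G$ is continuously differentiable, $G(u_0, 0) = h(u_0, x_0) = 0$, and the partial derivative with respect to $y$ at $(u_0,0)$ equals $A B = I_m$, which is invertible.

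The classical implicit function theorem then furnishes an open neighborhood $W$ of $u_0$ in $U$, an open neighborhood $Y$ of $0$ in $\mathbb{R}^m$, and a continuous map $\psi : W \to Y$ with $\psi(u_0) = 0$ and $G(u, \psi(u)) = 0$ for all $u \in W$. Setting $\sigma(u) \triangleq x_0 + B \psi(u)$ gives a continuous function $\sigma : W \to \mathbb{R}^n$ with $\sigma(u_0) = x_0$ and $h(u, \sigma(u)) = 0$, i.e., $\sigma(u) \in \mathcal{M}(u)$. This is precisely the desired continuous local selector.

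To conclude lower hemicontinuity at $u_0$: fix any open $V \subset \mathbb{R}^n$ with $\mathcal{M}(u_0) \cap V \neq \emptyset$, pick $x_0$ in this intersection, construct $\sigma$ as above, and observe that $\sigma^{-1}(V)$ is an open neighborhood of $u_0$ contained in $W$ on which $\sigma(u) \in \mathcal{M}(u) \cap V$, so that $\mathcal{M}(u) \cap V \neq \emptyset$. Since $u_0 \in U$ was arbitrary, $\mathcal{M}$ is lower hemicontinuous. The only non-routine ingredient is the reduction to an isomorphism partial derivative via the right inverse $B$; everything else is a direct application of the implicit function theorem and the definition of lower hemicontinuity.
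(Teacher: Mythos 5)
Your proof is correct and takes essentially the same approach as the paper: both reduce the surjective partial derivative to an isomorphism by restricting to a complement of the kernel, apply the implicit function theorem to produce a continuous local selector $\sigma(u) \in \mathcal{M}(u)$ through a given $x_0$, and read off lower hemicontinuity from continuity of $\sigma$. The only difference is cosmetic: the paper carries out a full change of basis $\Phi$ with an explicit kernel coordinate $k$ that it then freezes, whereas you parametrize the complement directly via a right inverse $B$, which is a leaner implementation of the same idea.
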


\begin{proof}
Let $u_0 \in U$ and $V$ be an open of $\mathbb{R}^n$ intersecting $\mathcal{M}(u_0)$. Let then $x_0 \in \mathbb{R}^n$ such that $h(u_0,x_0) = 0$ and $(u_0, x_0) \in V$. We will prove that there exists an open neighborhood $W$ of $u_0$ such that $\mathcal{M}(u) \cap V \neq \emptyset$ for all $u \in W$.

We note $S$ a supplement of $\ker \mathrm{d}h_{u_0}$, namely a subspace of $\mathbb{R}^n$ such that $S \oplus \ker \mathrm{d}h_{u_0} = \mathbb{R}^n$. We let $(f_1, \dots, f_p)$ be a basis of $S$ and $(f_{p+1}, \dots, f_n)$ be a base of $\ker \mathrm{d}h_{u_0}$, while $(e_1, \dots, e_n)$ is the canonical basis of $\mathbb{R}^n$. Consider now the change of basis map,
\[
\Phi : e_i \mapsto f_i,
\]
and define for $s \in \mathbb{R}^m, k \in \mathbb{R}^{n-m}, u \in U$,
\[
\tilde{h}(u,s,k) = h(u,\Phi(s,k)).
\]

Defining $(s_0, k_0) = \Phi^{-1}(x_0)$, we have for all $(s,k) \in \mathbb{R}^n, u \in \mathbb{R}^p$,
\begin{align*}
\mathrm{d}\tilde{h}(u_0,s_0,k_0)(u,s,k) &= \mathrm{d}h(u_0,x_0)(u, \mathrm{d}\Phi(s_0, k_0)(s,k)) \\
&= \mathrm{d}h(u_0,x_0)(u, \Phi(s,k)),
\end{align*}
so that,
\[
\tilde{\lambda} : s \in \mathbb{R}^p \mapsto \mathrm{d}\tilde{h}(u_0,s_0,k_0)(0,s,0) = \mathrm{d}h(u_0,x_0)(0,\Phi(s,0)),
\]
is invertible. Applying the implicit function theorem, there exists an open neighborhood $\Omega$ of $(u_0,x_0)$, a neighborhood $W$ of $(u_0, k_0)$ and a continuously differentiable function $\phi$ defined on $W$ to $\mathbb{R}^p$, such that,
\begin{align*}
&~ ((u,s,k) \in \Omega \text{ and } \tilde{h}(u,s,k)) \\
\iff &~
((u,k) \in W \text{ and } s = \phi(u,k)).
\end{align*}
Without loss of generality, $W$ is a ball centered on $(u_0, k_0)$. We call $\tilde{W} = W \cap (U \times \{0\})$, and see that for all $u \in \tilde{W}$ open neighborhood of $u_0$,
\[
h(u,\Phi(\phi(u,0),0)) = \tilde{h}(u,\phi(u,0),0) = 0.
\]
Without generality, we can assume that $\Phi(\phi(W,0),0) \subset V$ (by continuity of $\phi$ and $\Phi$), therefore $\mathcal{M}$ is lower hemicontinuous.
\end{proof}

\begin{lemma}
If $\phi : A \tto B$ is a lower hemicontinuous function, then so is $\bar{\phi}$.
\end{lemma}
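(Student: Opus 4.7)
The plan is to unfold the definition of lower hemicontinuity at an arbitrary point $a \in A$, use the density of $\phi(a)$ in $\bar\phi(a)$ to ``pull'' any test open set back to one that meets $\phi(a)$ itself, apply the hypothesis on $\phi$ to obtain a neighbourhood, and then reinflate by the trivial inclusion $\phi(a') \subset \bar\phi(a')$.

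Concretely, I would fix $a \in A$ and let $V \subset B$ be an arbitrary open set with $\bar\phi(a) \cap V \neq \emptyset$. I would then pick a point $b \in \bar\phi(a) \cap V$. Since $V$ is an open neighbourhood of $b$ and $b$ lies in the closure of $\phi(a)$, it follows that $V$ meets $\phi(a)$, i.e.\ $\phi(a) \cap V \neq \emptyset$. At this point I invoke the lower hemicontinuity of $\phi$: there exists a neighbourhood $U$ of $a$ such that $\phi(a') \cap V \neq \emptyset$ for every $a' \in U$.

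To finish, I note the pointwise inclusion $\phi(a') \subset \overline{\phi(a')} = \bar\phi(a')$, which gives $\bar\phi(a') \cap V \supset \phi(a') \cap V \neq \emptyset$ for every $a' \in U$. Since $a$ and $V$ were arbitrary (subject only to $V$ meeting $\bar\phi(a)$), this is precisely the statement that $\bar\phi$ is lower hemicontinuous at $a$, and hence everywhere on $A$.

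There is no real obstacle here: the proof is a one-line density argument plus the monotonicity of the operation $W \mapsto W \cap V \neq \emptyset$ under set inclusion. The only thing to be careful about is not to confuse lower hemicontinuity with upper hemicontinuity, since the closure operation generally destroys upper hemicontinuity properties (one would need local boundedness or compact-valuedness for that direction), whereas for lower hemicontinuity it is automatic and uses only that open sets that meet a closure already meet the underlying set.
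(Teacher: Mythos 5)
Your proof is correct and follows essentially the same route as the paper's: an open set meeting $\bar\phi(a)$ must meet $\phi(a)$ itself, then lower hemicontinuity of $\phi$ gives the neighbourhood, and the inclusion $\phi(a') \subset \bar\phi(a')$ finishes. The only cosmetic difference is that the paper works with a small ball $\mathcal{B}(b_0,\epsilon)\subset V$ rather than $V$ directly, which changes nothing in the argument.
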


\begin{proof}
Let $a_0 \in A$ and $V \subset B$ be an open set intersecting $\bar{\phi}(a_0)$. Let then $b_0 \in \bar{\phi}(a_0) \cap V$. Let $\epsilon > 0$, such that $\mathcal{B}(b_0, \epsilon) \subset V$, we note that this open ball also intersects $\phi(a_0)$ as otherwise $b_0$ would not belong to the closure of $\phi(a_0)$.

By lower hemicontinuity of $\phi$, there exists an open neighbourhood $I$ of $a_0$ such that for all $a \in I$, $\phi(a) \cap \mathcal{B}(b_0, \epsilon) \neq \emptyset$. \emph{A fortiori}, for all $a \in I$,
\[
    \bar{\phi}(a) \cap V \neq \emptyset. \qedhere
\]
\end{proof}

\begin{lemma}
If $V$ is an open of $A \times B$ and $\phi : A \tto B$ a lower hemicontinuous set-valued function, then,
\[
    \psi: a \in A \mapsto \{ b \in B, ~(a,b) \in V, ~b \in \phi(a) \} = \mathrm{s}V \cap \phi,
\]
is lower hemicontinuous.
\end{lemma}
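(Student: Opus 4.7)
The plan is to unpack the definition of lower hemicontinuity directly, using a single witness point and exploiting the openness of $V$ together with the lower hemicontinuity of $\phi$. Fix $a_0 \in A$ and an open set $W \subset B$ such that $\psi(a_0) \cap W \neq \emptyset$. I want to exhibit an open neighborhood $I$ of $a_0$ with $\psi(a) \cap W \neq \emptyset$ for all $a \in I$.

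First, I would pick a witness $b_0 \in \psi(a_0) \cap W$. By the very definition of $\psi$, this $b_0$ satisfies three simultaneous constraints: $(a_0, b_0) \in V$, $b_0 \in \phi(a_0)$, and $b_0 \in W$. The goal is to propagate all three properties to nearby $a$. The first constraint is purely topological in $A \times B$: since $V$ is open, there exist open neighborhoods $I_0$ of $a_0$ in $A$ and $W_0$ of $b_0$ in $B$ such that $I_0 \times W_0 \subset V$. The third constraint is automatic as soon as the chosen point lies in $W$, so I replace $W_0$ by the smaller open neighborhood $W' = W \cap W_0$ of $b_0$, still contained in $W_0$.

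Second, I would use the lower hemicontinuity of $\phi$ applied to the open set $W'$. Since $b_0 \in \phi(a_0) \cap W'$, there exists an open neighborhood $I_1$ of $a_0$ such that $\phi(a) \cap W' \neq \emptyset$ for every $a \in I_1$. Setting $I = I_0 \cap I_1$, which is still an open neighborhood of $a_0$, I pick for each $a \in I$ some $b \in \phi(a) \cap W'$. Then $(a, b) \in I_0 \times W_0 \subset V$, so $b \in \mathrm{s}V(a)$; combined with $b \in \phi(a)$ and $b \in W' \subset W$, this gives $b \in \psi(a) \cap W$, so $\psi(a) \cap W \neq \emptyset$.

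The argument is essentially bookkeeping: no real obstacle arises, because the three clauses defining $\psi$ decouple nicely once a witness is fixed — openness of $V$ handles the first clause via a product neighborhood, lower hemicontinuity of $\phi$ handles the second, and shrinking the target open set handles the third. The only subtlety worth stating carefully is that one must intersect the target $W$ with the $B$-slice of the product neighborhood \emph{before} invoking lower hemicontinuity of $\phi$, so that the point produced by $\phi$ automatically lies in $V$ and in $W$ at the same time.
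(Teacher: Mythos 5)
Your proof is correct and follows essentially the same route as the paper's: fix a witness $b_0 \in \psi(a_0) \cap W$, shrink around $b_0$ so that nearby pairs stay inside $V$ and $W$, then invoke lower hemicontinuity of $\phi$ on the shrunken target set. The only cosmetic difference is that the paper works with metric balls $\mathcal{B}((a_0,b_0),\epsilon)$, $\mathcal{B}(b_0,\nicefrac{\epsilon}{2})$, $\mathcal{B}(a_0,\nicefrac{\epsilon}{2})$ whereas you use general product-topology basis neighborhoods $I_0 \times W_0 \subset V$, which is marginally more general (no metric needed) but otherwise the same bookkeeping.
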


\begin{proof}
Let $a_0 \in A$ and $W$ be an open set of $B$ such that $\psi(a_0) \cap W \neq \emptyset$. Let then $b_0 \in \psi(a_0) \cap W$. Let $\epsilon > 0$ be such that $\mathcal{B}((a_0,b_0), \epsilon) \subset V$ and $\mathcal{B}(b_0, \epsilon) \subset W$. By lower hemicontinuity of $\phi$, there exists $I \subset A$ open neighbourhood of $a_0$ such that for all $a \in I$,
\[
    \phi(a) \cap \mathcal{B}(b_0, \nicefrac{\epsilon}{2}) \neq \emptyset,
\]
namely for all $a \in I$, there exists $b(a) \in \phi(a) \cap \mathcal{B}(b_0, \nicefrac{\epsilon}{2})$. Without loss of generality we assume that $I \subset \mathcal{B}(a_0, \nicefrac{\epsilon}{2})$, then as,
\[
    \mathcal{B}(a_0, \nicefrac{\epsilon}{2}) \times \mathcal{B}(b_0, \nicefrac{\epsilon}{2})
    \subset \mathcal{B}((a_0,b_0), \epsilon)
    \subset V,
\]
for all $a \in I$,
\[
    b(a) \in \psi(a) \cap W \neq \emptyset. \qedhere
\]
\end{proof}

With these three lemmas, we conclude that $K : t \in I \mapsto \overline{\mathrm{s}V(t) \cap \mathcal{M}(t)}$ is lower hemicontinuous.



\subsection{Upper hemicontinuity of $K$}


\begin{lemma}
A locally bounded set-valued function $\phi : A \tto B$ is close-valued and upper hemicontinuous if and only if its graph is closed.
\end{lemma}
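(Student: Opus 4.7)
The plan is to prove the two directions of the equivalence separately, working in the metric-space setting the paper uses (so sequential characterizations of closedness and upper hemicontinuity suffice).

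For the forward direction, I would assume $\phi$ is closed-valued, upper hemicontinuous, and locally bounded, and take an arbitrary convergent sequence $(a_n, b_n) \to (a, b)$ with $b_n \in \phi(a_n)$; the goal is to show $b \in \phi(a)$. I would argue by contradiction: since $\phi(a)$ is closed and $b \notin \phi(a)$, there is an open neighborhood $W$ of $b$ disjoint from $\phi(a)$, hence an open set $V \supset \phi(a)$ that does not meet a smaller open neighborhood $W'$ of $b$ (take $W' = \mathcal B(b, \epsilon/2)$ and $V$ the complement of $\overline{\mathcal B(b, \epsilon/2)}$ intersected with an open thickening of $\phi(a)$ inside the complement of $\overline{W'}$). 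Then upper hemicontinuity furnishes a neighborhood $U$ of $a$ with $\phi(U) \subset V$; for $n$ large $a_n \in U$, forcing $b_n \in V$, while eventually $b_n \in W'$, contradicting $V \cap W' = \emptyset$.

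For the reverse direction, I would assume the graph $\Gamma_\phi$ is closed. Closed-valuedness is the easy half: if $(b_n)$ is a sequence in $\phi(a)$ converging to $b$, then $(a, b_n) \in \Gamma_\phi$ and $(a, b_n) \to (a, b) \in \Gamma_\phi$, so $b \in \phi(a)$. For upper hemicontinuity, I would argue by contradiction: if $\phi$ fails to be upper hemicontinuous at some $a \in A$, there is an open $V \supset \phi(a)$ and a sequence $a_n \to a$ such that $\phi(a_n) \not\subset V$, so one can pick $b_n \in \phi(a_n) \setminus V$. Here local boundedness enters decisively: on a neighborhood $I$ of $a$, $\phi(I)$ lies in a compact $D \subset B$, so for $n$ large enough $b_n \in D$ and we may extract a subsequence $b_{n_k} \to b \in D$. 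Closedness of the graph then gives $(a, b) \in \Gamma_\phi$, i.e., $b \in \phi(a) \subset V$; but $b_{n_k} \in B \setminus V$ (which is closed) for all $k$, so $b \in B \setminus V$, the desired contradiction.

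The main obstacle is the reverse direction's upper-hemicontinuity half: without local boundedness, closed graph does not imply upper hemicontinuity, so I need to carefully use the compact $D$ from local boundedness to extract a convergent subsequence before invoking closedness of the graph. The forward direction is essentially a routine separation argument on $\phi(a)$ (closed) and $\{b\}$ in a metric space.
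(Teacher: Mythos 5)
Your proposal is correct and follows essentially the same line of argument as the paper. The ``closed graph $\Rightarrow$ UHC + closed-valued'' direction is identical (extract a convergent subsequence inside the compact $D$ provided by local boundedness, then apply closedness of the graph, with closed-valuedness observed directly); the converse is phrased as a separation-by-contradiction argument where the paper instead lets $b$ lie in every closed $\epsilon$-thickening of $\phi(a)$ and shrinks $\epsilon$, but both rest on the same use of upper hemicontinuity to trap $b_n$ near $\phi(a)$.
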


\begin{proof}
First we show that the closed graph property implies upper hemicontinuity and continuous values, assume then the graph of $\phi$ is closed. Let $a \in A$ and $V$ be an open set containing $\phi(a)$. We also denote $I$ a neighbourhood of $a$ and $D$ a compact containing $\phi(I)$, as $\phi$ is locally bounded. Suppose there is no neighbourhood $J$ of $a$ such that $\phi(J) \subset V$, then there exists a sequence $(a_n)$ such that $a_n \to_n a$ and $\phi(a_n) \cap (D \setminus V) \neq \emptyset$, we let then $(b_n)$ be a sequence of elements in these intersections. By compacity of $D$ in which $(b_n)$ belongs, we can assume without loss of generality that it converges to some limit, $b_n \to_n b \in D$. But then we have,
\[
    b_n \in \phi(a_n), ~a_n \to_n a, ~b_n \to_n b,
\]
by closure of the graph of $\phi$, this means $b \in \phi(a)$, but by closure of $D \setminus V$, it must be that $b \in D \setminus V$, all the while $b \in \phi(a) \subset V$, which is absurd. Finally, for all $a \in A$, $\phi(a) = \Gamma(\phi) \cap (\{a\} \times B)$ is closed as intersection of closed sets, with $\Gamma(\phi)$ denoting the graph of $\phi$.

\medskip

Assume now $\phi$ is upper hemicontinuous with closed values. Let $(a_n,b_n)_{n \in \mathbb{N}}$ be a sequence such that $(a_n, b_n) \to_n (a, b)$ some limit, while $b_n \in \phi(a_n)$. Let $W$ be an open containing $\phi(a)$, by upper hemicontinuity, there exits an open interval $I$ containing $a$ such that for all $a' \in I$, $\phi(a') \subset W$. As $a_n \to_n a$, for $n$ large enough $a_n \in I$, so $b_n \in \phi(a_n) \subset W$, therefore $b_n \in \bar W$. In turn, by taking $\epsilon > 0$ to $0$ using,
\begin{align*}
W_\epsilon &= \{ y \in B, ~d(y, \phi(a)) < \epsilon \} \\
&= \bigcup_{y \in B} B(y, \epsilon),
\end{align*}
we find that $b \in \overline{\phi(a)} = \phi(a)$, since $\phi$ has closed values.
\end{proof}

\begin{lemma}
If $\overline{\mathrm sV}$ is upper hemicontinuous, then so is $\overline{\mathrm sV} \cap \mathcal{M}$.
\end{lemma}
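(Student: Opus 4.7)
The plan is to reduce this claim to the closed-graph characterization proved in the preceding lemma, which says that a locally bounded set-valued function with closed values is upper hemicontinuous if and only if its graph is closed. So the strategy has three ingredients: local boundedness, closed values, and closed graph.

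First, I would verify local boundedness of the correspondence $t \mapsto \overline{\mathrm sV(t)} \cap \mathcal{M}(t)$. The definition of basin already stipulates that $\mathrm sV$ is locally bounded, and closing values pointwise preserves local boundedness (since the closure of a set sitting inside a compact neighborhood still sits in that compact neighborhood). Intersecting with $\mathcal{M}(t)$ can only shrink the values, so the intersection correspondence is locally bounded as well. This is the one spot where one must be careful: local boundedness does \emph{not} come from $\mathcal{M}$ itself (which is typically unbounded) but from $\mathrm sV$; I would spell that out.

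Next, I would check that the values are closed (obvious, as intersections of the closed set $\overline{\mathrm sV(t)}$ with $\mathcal{M}(t) = h_t^{-1}(\{0\})$, the latter being closed since $h$ is continuous in $x$) and that the graph is closed. For the graph, take any sequence $(t_n, x_n)$ in the graph of $\overline{\mathrm sV} \cap \mathcal{M}$ that converges to some $(t, x)$. By hypothesis $\overline{\mathrm sV}$ is upper hemicontinuous with closed values, so the previous lemma forces its graph to be closed, giving $x \in \overline{\mathrm sV(t)}$. Moreover $h(x_n, t_n) = 0$ for every $n$, and joint continuity of $h$ (from Assumption \ref{as:functionclass}) yields $h(x, t) = 0$, hence $x \in \mathcal{M}(t)$. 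Combining the two, $(t, x)$ lies in the graph of the intersection.

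Having established local boundedness, closed values, and a closed graph, the preceding lemma delivers upper hemicontinuity of $\overline{\mathrm sV} \cap \mathcal{M}$, which is exactly the desired conclusion. I do not anticipate any serious obstacle: the argument is a routine closed-graph bookkeeping. The only subtle point is that the local boundedness hypothesis is what makes the equivalence between closed graph and upper hemicontinuity available, so one has to invoke the basin assumption explicitly at that step.
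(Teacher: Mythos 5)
Your argument is essentially the paper's own: invoke the preceding closed-graph characterization, observe that the graph of $\overline{\mathrm sV}\cap\mathcal{M}$ is $\Gamma(\overline{\mathrm sV})\cap h^{-1}(\{0\})$ (closed as an intersection of closed sets), note local boundedness is inherited from $\mathrm sV$, and apply the lemma again. The paper states the graph identity directly rather than unwinding it with a sequence, but the content is identical.
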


\begin{proof}
By the previous lemma, $\overline{\mathrm sV}$ being locally bounded with closed values and upper hemicontinuous, its graph is closed. As a result, the graph of $\overline{\mathrm sV} \cap \mathcal{M}$,
\[
\Gamma(\overline{\mathrm sV} \cap \mathcal{M})
= \Gamma(\overline{\mathrm sV}) \cap h^{-1}(\{0\}),
\]
is closed. We add that $\overline{\mathrm sV} \cap \mathcal{M}$ is locally bounded, then by the previous lemma $\overline{\mathrm sV} \cap \mathcal{M}$ is upper hemicontinuous.
\end{proof}

With the assumption that at all times $t$,
\[
\partial (\overline{\mathrm sV(t)} \cap \mathcal{M}(t)) = \emptyset,
\]
we get that,
\[
\partial (\mathrm sV(t) \cap \mathcal{M}(t)) = \emptyset,
\]
thus,
\[
\overline{\mathrm sV \cap \mathcal M}
\]
\[
\mathrm sV(t) \cap \mathcal{M}(t)
= \overline{\mathrm sV(t)} \cap \mathcal{M}(t),
\]
therefore,
\[
\overline{\mathrm sV \cap \mathcal{M}}
= \overline{\overline{\mathrm sV} \cap \mathcal{M}}
= \overline{\mathrm sV} \cap \mathcal{M}.
\]
With the additional assumption that $\overline{\mathrm sV}$ is upper hemicontinuous and the previous lemma,
\[
K = \overline{\mathrm sV \cap \mathcal{M}},
\]
is upper hemicontinuous.






\subsection{Construction of $\hat f$ and $\hat K$}

In the following proof we construct $\hat K$ which acts as a prison (a jail through time), simply from the definition of a basin.

\begin{proof}[Proof of Proposition \ref{prop:basin}]
Let $t \in I$ and define,
\[
    \hat{f}(t) = \frac{1}{2} \left(
    f^*(t) +
    \min_{\partial (\mathrm sV(t)) \cap \mathcal M(t)} f_t
    \right) > f^*(t),
\]
as otherwise we would have,
\[
f^*(t) = \min_{K(t)} f_t = \min_{\partial (\mathrm sV(t)) \cap \mathcal M(t)} f_t,
\]
thus,
\[
    \partial (\mathrm sV(t)) \cap K^*(t) \neq \emptyset,
\]
contradicting the hypothesis that,
\[
    K^*(t) \subset \mathrm sV(t),
\]
as $\mathrm sV(t)$ is open.

Assume that there is no $\epsilon > 0$ such that for all $t \in I$,
\[
    \hat f(t) - f^*(t) \geq \epsilon.
\]
Then there exists a sequence $(t_n)$ of $I$ compact such that $\hat f(t_n) - f^*(t_n) \to_n 0$, without loss of generality $t_n \to_n \bar t \in I$ some limit. We have $t_n \to_n \bar t$ and,
\[
\min_{\partial (\mathrm sV(t_n)) \cap \mathcal M(t_n)} f_{t_n} \to_n f^*(\bar t).
\]
Call $x_n$ a minimum at time $t_n$. As $\mathrm K$ is locally bounded, without loss of generality, $x_n \to_n \bar x \in K(\bar t)$ some limit as the graph of $K$ is closed. By continuity,
\[
f(x_n, t_n) \to_n f(\bar x, \bar t) = f^*(\bar t),
\]
thus $\bar x \in K^*(\bar t)$. We can show however that $\bar x \in \partial (\mathrm sV(\bar t))$, which is a contradiction with the previous fact, hence the existence of $\epsilon > 0$.

Indeed, since $\overline{\mathrm sV}$ has a closed graph (since locally bounded, upper hemicontinuous with closed values), then so is the graph of $\partial (\mathrm sV)$, as it can be written,
\[
\Gamma(\partial (\mathrm sV)) = \Gamma(\overline{\mathrm sV} \setminus \mathrm sV) = \Gamma(\overline{\mathrm sV}) \setminus V.
\]

\medskip

Define now, for all $t \in I$,
\[
    \hat K(t) = \{ x \in K(t), ~f(x,t) \leq \hat f(t) \} \subset K(t).
\]
We simply now need to show that $\hat K(t) \subset \mathrm sV(t)$. Well, clearly for all $x \in \hat K(t)$, $x \in K(t) = \overline{\mathrm sV(t)} \cap \mathcal M(t)$ and,
\[
    f(x,t) \leq \hat f(t) < \min_{\partial (\mathrm sV(t)) \cap \mathcal M(t)} f_t,
\]
so that $x \not \in \partial (\mathrm sV(t))$, thus $x \in \mathrm sV(t)$.
\end{proof}

\clearpage

\section{Proof of Theorem \ref{thm:ode}}
\label{app:ode}

\begin{lemma}
\label{lem:proj}
$P = I_n - J^\top (J J^\top)^{-1} J$ is the orthogonal projection on $\ker J$, thus,
\begin{equation}
    I_n \succeq P \succeq 0.
\end{equation}
\end{lemma}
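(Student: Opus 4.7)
The plan is to verify that $P$ satisfies the three defining properties of an orthogonal projector onto $\ker J$, from which the spectral bounds $0 \preceq P \preceq I_n$ will follow automatically from a standard fact.

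First, observe that by Assumption~\ref{ass:cont}, $J$ has full row rank, so $JJ^\top$ is invertible and $P$ is well-defined. I would then verify two algebraic identities by direct computation: symmetry, $P^\top = P$, which is immediate since $(J^\top(JJ^\top)^{-1}J)^\top = J^\top((JJ^\top)^{-1})^\top J = J^\top(JJ^\top)^{-1}J$ using that $JJ^\top$ is symmetric; and idempotency, $P^2 = P$, which follows by expanding $P^2$ and observing that $\left(J^\top(JJ^\top)^{-1}J\right)\left(J^\top(JJ^\top)^{-1}J\right) = J^\top(JJ^\top)^{-1}J$, so the cross terms in the expansion collapse.

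Next, I would identify the range. For $\Ima(P) \subseteq \ker J$, multiply on the left by $J$: $JP = J - JJ^\top(JJ^\top)^{-1}J = J - J = 0$. Conversely, if $v \in \ker J$, then $Pv = v - J^\top(JJ^\top)^{-1}Jv = v - 0 = v$, so $v \in \Ima(P)$. Together with symmetry and idempotency, this certifies that $P$ is the orthogonal projector onto $\ker J$.

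Finally, the spectral bounds follow from a general fact about orthogonal projectors: for any $x \in \mathbb{R}^n$, $\langle Px, x\rangle = \langle P^2 x, x\rangle = \langle Px, P^\top x\rangle = \|Px\|^2 \geq 0$, which gives $P \succeq 0$. Noting that $I_n - P = J^\top(JJ^\top)^{-1}J$ is also symmetric and idempotent (it is the orthogonal projector onto $\Ima(J^\top) = (\ker J)^\perp$), the same argument applied to $I_n - P$ yields $I_n - P \succeq 0$, i.e.\ $I_n \succeq P$. The only place the hypotheses genuinely enter is in invoking LICQ to make $(JJ^\top)^{-1}$ meaningful; the remainder is routine linear algebra, with no real obstacle.
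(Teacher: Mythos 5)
Your proof is correct, but it takes a different route from the paper's. The paper's argument is shorter and more direct: it decomposes $\mathbb{R}^n = \ker J \oplus (\ker J)^\perp = \ker J \oplus \Ima J^\top$ and simply checks that $P$ acts as the identity on $\ker J$ (for $x \in \ker J$, $Px = x$ since $Jx = 0$) and annihilates $(\ker J)^\perp$ (for $x = J^\top y$, $Px = J^\top y - J^\top(JJ^\top)^{-1}JJ^\top y = 0$). This pointwise behavior on a complementary pair of subspaces uniquely characterizes the orthogonal projector, and the spectral bounds are left implicit as a standard consequence. You instead verify the algebraic characterization — symmetry, idempotency, and $\Ima P = \ker J$ — and then derive $0 \preceq P \preceq I_n$ by writing $\langle Px,x\rangle = \|Px\|^2$ and noting $I_n - P$ is also a projector. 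Your approach is a bit longer but more self-contained (it explicitly proves the positive-semidefiniteness rather than appealing to it); the paper's is more economical and exploits the orthogonal decomposition of the domain. Both are sound, and both correctly invoke LICQ only to justify invertibility of $JJ^\top$.
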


\begin{proof}[Proof of Lemma \ref{lem:proj}]
Let $x \in \ker J$, then,
\[
    (I_n - J^\top (J J^\top)^{-1} J)x = x.
\]
Now since $(\ker J)^\perp = \Ima J^\top$, we let $x = J^\top y$ be an element of $(\ker J)^\perp$, then,
\begin{align*}
    (I_n - J^\top (J J^\top)^{-1} J)x
    &= (I_n - J^\top (J J^\top)^{-1} J) J^\top y \\
    &= 0. \qedhere
\end{align*}
\end{proof}

\begin{proposition}[Continuous dependency on initial condition]
\label{prop:continuityflow}
Let $g : U \times \mathbb R_+ \subset \mathbb{R}^n \times \mathbb{R} \to \mathbb{R}^n$ be continuously differentiable on $U \times \mathbb R_+$ open 
and consider the ODE,
\begin{equation}
    \label{eq:continuityflowode}
    \dot{x} = g(x,t).
\end{equation}
Let $x_0 \in U$ and $(I,x)$ be a maximal solution to \eqref{eq:continuityflowode} with initial condition $x(0) = x_0$. Then, for all $[0,T] \subset I$ closed interval and all $\epsilon > 0$, there exists $\delta > 0$ such that the flow $\varphi$ is defined on $[0,T] \times B(x_0, \delta)$, and for all $y_0 \in B(x_0, \delta)$ and $t \in [0,T]$,
\[
    \| \varphi^t(y_0) - \varphi^t(x_0) \| \leq \epsilon.
\]
\end{proposition}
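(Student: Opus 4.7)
The plan is to combine the standard Gr\"onwall estimate with an escape-dilemma bootstrap. The reference trajectory $x([0,T])$ is a compact subset of the open set $U$, so there exists $r>0$ such that the tubular neighbourhood
\[
    K \triangleq \{ y \in \mathbb R^n \st d(y, x([0,T])) \leq r \}
\]
is a compact subset of $U$. By continuous differentiability of $g$ on $K \times [0,T]$, the partial Jacobian $\partial_x g$ is bounded, so $g$ is Lipschitz in its first variable on $K \times [0,T]$ with some constant $L \geq 0$.

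Now fix $\epsilon>0$ and set
\[
    \delta = \min\{r, \epsilon\} \cdot e^{-LT}.
\]
For any $y_0 \in B(x_0,\delta)$, let $(J, y)$ denote the maximal solution with $y(0) = y_0$, and let
\[
    \tau \triangleq \sup\{ t \in J \cap [0,T] \st y([0,t]) \subset K \}.
\]
By continuity $\tau > 0$. On $[0,\tau]$, subtracting the two ODEs and using the Lipschitz bound yields
\[
    \|y(t) - x(t)\| \leq \|y_0 - x_0\| + L \int_0^t \|y(s) - x(s)\|\, ds,
\]
and Gr\"onwall's inequality gives
\[
    \|y(t) - x(t)\| \leq \|y_0 - x_0\|\, e^{Lt} \leq \delta\, e^{LT} \leq \min\{r,\epsilon\}
\]
for all $t \in [0,\tau]$.

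It remains to show $\tau = T$, which is the main obstacle. Suppose for contradiction that $\tau < T$. The Gr\"onwall bound shows $y(t)$ stays in the strict interior of $K$ (distance at most $\min\{r,\epsilon\}/2 \cdot$ some slack after shrinking $\delta$ slightly if needed, or arguing that $y(\tau) \in K$ with room to spare because $\min\{r,\epsilon\} \leq r$ and continuity prevents immediate exit). In fact $y(\tau) \in x([0,T]) + \overline B(0,r)$ lies at distance at most $\min\{r,\epsilon\} \leq r$ from the reference curve, hence in the interior of $K$ provided $\delta$ is chosen so that this distance is strictly less than $r$ (replace $\min\{r,\epsilon\}$ by $\min\{r/2,\epsilon\}$ throughout if needed). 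By Lemma \ref{lem:escape} applied to the autonomous system on $\mathbb R^n \times \mathbb R$ obtained by setting $\dot t = 1$, the solution $y$ cannot terminate while trapped in the compact set $K \times [0,T]$, so $y$ extends past $\tau$; by continuity, $y$ remains in $K$ on a slightly larger interval, contradicting the definition of $\tau$. Therefore $\tau = T$, the flow $\varphi^t(y_0)$ is defined for all $t \in [0,T]$, and the Gr\"onwall estimate yields $\|\varphi^t(y_0) - \varphi^t(x_0)\| \leq \epsilon$ as required.
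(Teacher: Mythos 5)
Your proof is correct and takes essentially the same route as the paper: a Gr\"onwall estimate on a compact tubular neighbourhood of the reference trajectory, bootstrapped by the escape dilemma (Lemma \ref{lem:escape}) to show the perturbed solution is defined on all of $[0,T]$. The only material difference is that you explicitly autonomize the system (adding $\dot t = 1$) before invoking the escape dilemma, whereas the paper applies the lemma directly to the non-autonomous equation, which is a small imprecision on its part that your version quietly fixes; the paper also tracks the tube radius as $\epsilon$ itself after a WLOG shrinking step, where you introduce a separate $r$ and take $\min\{r,\epsilon\}$, which is equivalent.
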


This proposition allows us to consider \eqref{eq:odescaled} as an approximation of \eqref{eq:odescaledflat}. Indeed, we can instead consider the following ODE, which passes $\alpha$ as an initial condition,
\begin{align*}
     \dot{x}(s) =&~ - P \nabla_x f(x(s), \alpha(s) s+t_0) \\
     &~ - \alpha(s) J^\top (J J^\top)^{-1} h'(\alpha(s) s+t_0) \\
     \dot{\alpha}(s) =&~ 0.
     \tag{ODE$_\alpha^*$}\label{eq:odescaledparam}
\end{align*}

\begin{proof}[Proof of Proposition \ref{prop:continuityflow}]
Let $\delta > 0$ and $T >0$ be such that $[0,T] \subset I$. Without loss of generality, we assume that for all $t \in [0,T]$,
\[
\bar{B}(\varphi^t(x_0),\epsilon) \subset U.
\]
Surely, as long as it is defined,
\[
    \varphi^t(u) = u + \int_0^t g(\varphi^s(u),s) \mathrm{d}s,
\]
therefore,
\begin{align}
    &~ \| \varphi^t(u) - \varphi^t(v) \| \\
    \leq&~ \|u-v\| + \int_0^t \| g(\varphi^s(u),s) - g(\varphi^s(v),s) \| \mathrm{d}s \\
    \leq&~ \|u-v\| + \Lambda_\delta \int_0^t \| \varphi^s(u) - \varphi^s(v) \| \mathrm{d}s,
\end{align}
where,
\[
    \Lambda = \max_{t \in [0,T], \|z-\varphi^t(x_0)\| \leq \epsilon} \vertiii{\mathcal{J}_g(z,t)},
\]
and for any linear map $A$, the operator norm is defined as,
\[
    \vertiii{A} = \max_{x, ~\|x\|=1} \|Ax\|.
\]
Therefore, by Gr\"onwall's lemma, for $t \in [0,T]$,
\begin{equation} \label{eq:gronresult}
    \| \varphi^t(u) - \varphi^t(v) \|
    \leq \|u-v\| e^{\Lambda t}
    \leq \|u-v\| e^{\Lambda T},
\end{equation}
again, as long as the flow is defined. 

Decrease now $\delta > 0$ if necessary, such that,
\[
    \delta e^{\Lambda T} \leq \epsilon.
\]
Let $y_0 \in B(x_0, \delta)$ and $(J,y)$ be a maximal solution to \eqref{eq:continuityflowode} with initial condition $y(0) = y_0$. Applying \eqref{eq:gronresult} and the escape dilemma \eqref{lem:escape} to $y$ with the compact,
\[
    \bigcup_{t \in [0,T]} \bar{B}(\varphi^t(x_0), \epsilon),
\]
we deduce that $[0,T] \subset J$. As a result, the flow is defined on $[0,T] \times B(x_0, \delta)$ and for all $t \in [0,T]$ and $u,v \in B(x_0, \delta)$,
\[
    \| \varphi^t(u) - \varphi^t(v) \|
    \leq \|u-v\| e^{\Lambda T} \leq \epsilon. \qedhere
\]
\end{proof}



\begin{proof}[Proof of Theorem \ref{thm:ode}]
For the sake of readability, we let $g_\alpha$ denote the right-hand side of \eqref{eq:ode}, while $g_\alpha^t$ (respectively $g_0^t$) will denote the right-hand side of \eqref{eq:odescaled} (respectively \eqref{eq:odescaledflat}) at time $t$ (instead of $t_0$).

\proofpart{Uniform continuity of $f^*$}
As $K$ is continuous, of compact non-empty values, $K^*$ is upper hemicontinuous of compact non-empty values and $f^*$ is continuous. Since $f^*$ is continuous on the compact $I$, it is uniformly continuous, thus without loss of generality $\nu > 0$ is such that for all $t,t' \in I$,
\begin{equation}
    |t-t'| \leq \nu \implies | f^*(t) - f^*(t')| \leq \epsilon.
    \label{eq:unifcontf}
\end{equation}
This justifies that if the solution of \eqref{eq:odescaled} somehow tracks with a delay, at least in the proof, it still tracks $f^*$ at its current value.

\proofpart{Uniform convergence of $\psi_0$}

We are now extending the Lyapunov argument uniformly in time. In other words, we will find $T$ (called then later $\bar s$) much like proposition \ref{prop:lyap}, but valid at all time $t \in I$.

Reducing $\epsilon > 0$ if necessary, without loss of generality we assume henceforth that, for all $t \in I$,
\[
    \hat{f}(t) - f^*(t) \geq 4\epsilon.
\]
As $K$ is continuous with nonempty compact values and $f$ 
is continuous,
\[
    t \in I \mapsto \max_{K(t)} f_t - f^*(t),
\]
is continuous as well, which allows us to define,
\[
    R = \max_{t \in I, ~K(t)} f_t - f^*(t).
\]
We then have, for all $t \in I$,
\[
    R \geq \hat{f}(t) - f^*(t) \geq 4\epsilon.
\]
Define also,
\[
    K^\epsilon : t \in I \mapsto \{ y \in K(t), ~f(y,t) < f^*(t) + \epsilon \}.
\]
We verify that $K \setminus K^\epsilon$ is an upper hemicontinuous function. We know it is locally bounded so it suffices to show its graph is closed. Let then $t_n \to_n t$ and $x_n \to_n x$ be sequences converging to some limits such that $x_n \in K(t_n) \setminus K^\epsilon(t_n)$, then,
\[
    x_n \in K(t_n) \text{ and } f(x_n, t_n) \geq f^*(t_n) + \epsilon,
\]
therefore, by continuity of $K$ and $f,f^*$,
\[
    x \in K(t) \text{ and } f(x, t) \geq f^*(t) + \epsilon,
\]
thus $x \in K(t) \setminus K^\epsilon(t)$, therefore $K \setminus K^\epsilon$ is indeed upper hemicontinuous. Its values are also non-empty and compact. As a result, by the maximum theorem, the following function is lower semicontinuous,
\[
    t \in I \mapsto - \max_{K(t) \setminus K^\epsilon(t)} \nabla_x f_t^\top g_0^t
\]
and positive. Thus, it attains a minimum on the compact $J$, of value $\lambda > 0$, again so called after proposition \ref{prop:lyap}. Define then $\bar{s} = \nicefrac{R}{\lambda}$, following the same argument as proposition \ref{prop:lyap},
\begin{equation}
    \forall t \in I, x \in K(t), ~f(\psi^{\bar{s},t}_0(x), t) - f^*(t) \leq \epsilon.
    \label{eq:scaled}
\end{equation}

\proofpart{Continuous dependence on $\alpha$}
We want to establish the continuity of the solution of \eqref{eq:odescaled} with respect to the parameter $\alpha$. Let then $t \in I$, $x \in K(t)$, $\tau \in [0,\bar{s}]$, $\alpha \in [0,1]$, we have, as long as it is defined,
\[
    \psi^{\tau, t}_\alpha(x)
    = x + \int_0^\tau g^t_\alpha(\psi^{\sigma, t}_\alpha(x), \sigma) \mathrm{d} \sigma.
\]
Therefore, as long as $\psi^{s, t}_\alpha(x)$ is defined 
and $\| \psi^{s, t}_\alpha(x) - \psi^{s, t}_0(x) \| \leq 1$ 
for all $s \in [0,\tau]$,
\begin{align*}
    &~\| \psi^{\tau, t}_\alpha(x) - \psi^{\tau, t}_0(x) \| \\
    \leq&~
    \int_0^\tau \| g^t_\alpha(\psi^{\sigma, t}_\alpha(x), \sigma) - g^t_\alpha(\psi^{\sigma, t}_0(x), \sigma) \| \mathrm{d} \sigma \\
    &~ + \int_0^\tau \| g^t_\alpha(\psi^{\sigma, t}_0(x), \sigma) - g^t_0(\psi^{\sigma, t}_0(x), \sigma) \| \mathrm{d} \sigma \\
    \leq &~
    \Lambda \int_0^\tau \| \psi^{\sigma, t}_\alpha(x) - \psi^{\sigma, t}_0(x) \| \mathrm{d} \sigma + A \alpha,
\end{align*}
where,
\[
    A = \bar{s} \max_{t \in I} \max_{x \in K(t)} \max_{\sigma \in [0,\bar{s}]} \max_{\alpha \in [0,1]} \left\| \frac{\partial g_\alpha^t}{\partial \alpha}(x, \sigma) \right\|,
\]
and $\Lambda$ is defined by,
\[
    \max_{t \in I} \max_{\sigma \in [0,\bar{s}]}  \max_{x \in K(t)} \max_{\alpha \in [0,1]} \max_{x\in\mathbb{R}^n, d(x,K(t)) \leq 1} \vertiii{\mathcal{J}_{g_\alpha^t} (x, \sigma)}.
\]
As a reminder, for all $x \in \mathbb{R}^n$ and $\alpha,s,t \geq 0$,
\[
    g_\alpha^t(x,s)
    = - P \nabla_x f(x, \alpha s+t) - \alpha J^\top (J J^\top)^{-1} h'(x,\alpha s+t),
\]
where $J,P$ are evaluated at $(x, \alpha s +t)$. $g_\alpha^t(x,s)$ is thus continuously differentiable in $x$, $\alpha$, $s$ and $t$, justifying the given bounds.
As a result, by Gr\"onwall's lemma,
\[
    \| \psi^{\tau, t}_\alpha(x) - \psi^{\tau, t}_0(x) \|
    \leq A \alpha e^{\Lambda \tau},
\]
again, as long as $\psi^{s, t}_\alpha(x)$ is defined 
and $\| \psi^{s, t}_\alpha(x) - \psi^{s, t}_0(x) \| \leq 1$ 
for all $s \in [0,\tau]$. Define $\bar{\alpha} = \min(1, \nicefrac{e^{-\Lambda \bar{s}}}{A})$, then with the addition of the escape dilemma (lemma \ref{lem:escape}), this ensures $\psi^{\tau, t}_\alpha(x)$ is well-defined for all $t \in I$, $x \in K(t)$, $\tau \in [0, \bar{s}]$ and $\alpha \in [0,\bar{\alpha}]$ and,
\[
    \| \psi^{\tau, t}_\alpha(x) - \psi^{\tau, t}_0(x) \|
    \leq A \bar{\alpha} e^{\Lambda \bar{s}}.
\]
Without loss of generality, we can reduce $\bar{\alpha}$ such that,
\[
    A \bar{\alpha} e^{\Lambda \bar{s}} \leq \nicefrac{\epsilon}{L},
\]
then,
\begin{align}
    \forall \alpha \leq \bar{\alpha}, 
    ~t \in I, ~x \in K(t), ~s \in [0,\bar{s}], \notag\\
    ~ \| \psi^{s,t}_\alpha(x) -  \psi^{s,t}_0(x) \|
    \leq \nicefrac{\epsilon}{L}. \label{eq:cont}
\end{align}

\proofpart{The prison argument}
If $\phi$ is a continuous function defined on an interval $J = [t_0,t_2] \subset I$ satisfying $h(\phi(t),t)=0$, $f(\phi(t),t) - f^*(t) \leq 3 \epsilon$ for all $t \in J$, and $\phi(t_0) \in K(t_0)$, then,
\begin{equation}
    \forall t \in J, ~\phi(t) \in \hat{K}(t).
\end{equation}
In other words, $\phi$ cannot escape $\hat{K}$ once it has entered and as long as it satisfies the constraint ($h = 0$) and its $f$-value is strictly less than the edge value $\hat{f}$.

Indeed, otherwise denote,
\[
    t_3 = \inf_{t \in J, ~\phi(t) \not\in \hat{K}(t)} t.
\]
By upper hemicontinuity of $\hat{K}$, it must be that $\phi(t_3) \in \hat{K}(t_3)$, as either $t_3 = t_0$, or there exists a sequence $(t_n)$ such that $t_n \to_n t_3$ and $\phi(t_n) \in \hat{K}(t_n)$, which implies $\phi(t_3) \in \hat{K}(t_3)$. Let $V$ isolate $\hat{K}(t_3)$ from the lower level set of value $\hat{f}(\phi(t_3))$. By continuity, there exits $\eta > 0$ such that $\phi([t_3, t_3 + \eta]) \subset V$, but then for $t \in [t_3, t_3 + \eta]$,
\[
    h(\phi(t), t) = 0, ~f(\phi(t),t) \leq f^*(t) + 3\epsilon \leq \hat{f}(t), ~(t,\phi(t)) \in V,
\]
therefore,
\[
    \phi(t) \in \hat{K}(t),
\]
which contradicts the definition of $t_3$, thus is absurd.

We want to verify that $\varphi_\alpha$ is defined over $J \times K(t_0)$ for $\alpha \in (0,\bar{\alpha}]$, and further that for all $t \in [t_0 + \nu, t_1]$ and $x \in K(t_0)$, 
\[
    \varphi^t_\alpha(x) \in K(t).
\]
For the second point, once it has been shown to indeed be defined, we simply need to show that for all $t \in J$, and $x \in K(t_0)$,
\[
    h(\varphi^t_\alpha(x),t) = 0
    \text{ and }
    f(\varphi^t_\alpha(x),t) \leq f^*(t) + 3 \epsilon,
\]
as we just saw. The first equality comes easily from,
\begin{align*}
    \frac{\mathrm{d}}{\mathrm{d}t} h(\varphi_\alpha^t(x),t)
    = &~ - \frac{1}{\alpha} J(I_n - P) \nabla_x f(\varphi_\alpha^t(x), t) \\
    &~ - J J^\top (J J^\top)^{-1} h'(\varphi_\alpha^t(x),t) \\
    &~ + h'(\varphi_\alpha^t(x),t) \\
    =&~ 0,
\end{align*}
while $h(x,t_0) = 0$. The other fact is proven in the next part.

\proofpart{Patching parts}
Now, let us gather all results.
Let then $\alpha \in (0, \bar{\alpha}]$, $x \in K(t_0)$ and $s \in [0, \min(\bar{s}, \nicefrac{t_1-t}{\alpha})]$, result \eqref{eq:cont} guarantees the existence of $\psi^{s,t_0}_\alpha(x)$ and we have,
\[
    \| \psi^{s,t_0}_\alpha(x) - \psi^{s,t_0}_0(x)) \| \leq \nicefrac{\epsilon}{L}.
\]
Since $f$ is $L$-Lipschitz, $f^*$ is uniformly continuous and \eqref{eq:odescaled} resembles \eqref{eq:odescaledflat} for $\alpha$ small enough,
\begin{align*}
    &~ f(\psi^{s,t_0}_\alpha(x), \alpha s + t_0) - f^*(\alpha s +t_0) \\
    =&~ f(\psi^{s,t_0}_\alpha(x), t_0) - f(\psi^{s,t_0}_0(x), t_0)
    + f(\psi^{s,t_0}_0(x), t_0) - f^*(t_0) \\
    &~+ f^*(t_0) - f^*(t_0 + \alpha s) \\
    \leq&~ 3\epsilon, 
\end{align*}
thus for all $s \leq \bar{s}$ such that $t_0 + \alpha s \leq t_1$,
\[
    f(\varphi^{t_0+\alpha s}_\alpha(x), t_0 + \alpha s) \leq f^*(t_0 + \alpha s) + 3 \epsilon.
\]
We deduce that $\varphi^{t_0+\alpha \bar{s}}_\alpha(x) \in K(t_0+\alpha \bar{s})$. Applying the exact same method again with time $t_0 + \alpha \bar{s}$ instead of $t_0$, we find for all $s \leq \bar{s}$ such that $t_0 + \alpha s \leq t_1$,
\[
    f(\varphi^{t_0+\alpha (\bar{s}+s)}_\alpha(x), t_0 + \alpha (\bar{s}+s)) \leq f^*(t_0 + \alpha (\bar{s}+s)) + 3 \epsilon,
\]
we can repeat until $J$ is covered so that for all $t \in J$,
\[
    f(\varphi^t_\alpha(x), t) \leq f^*(t) + 3 \epsilon. \qedhere
\]

\clearpage

\end{proof}

\section{Non-uniform convergence}

\begin{definition}
Given a flow $\varphi$ on $U \subset \mathbb{R}^n$ open, we say that $x$ is a wandering point if there exists an open neighbourhood $V$ of $x$ and a time $\bar{t} >0$ such that,
\[
    \forall t \geq \bar{t}, ~\varphi^t(V) \cap V = \emptyset
\]
Note that points $y \in V$ such that $\varphi^t(y)$ is not defined are omitted from the set $\varphi^t(V)$, to account for possible escapes.

We call $W$ the wandering set, the set of wandering points, and $M = U \setminus W$ the non-wandering set.
\end{definition}


\begin{lemma}[Convergence to $M$ for $X$ compact]
Let $X$ be a compact metric space and $\varphi$ be a flow defined for all positive times such that $\varphi^t(X) \subset X$ for all $t \geq 0$. Then, for all $\epsilon>0$ and for all $x \in X$, there exists $\bar{t} > 0$ such that for all $t \geq \bar{t}$
\[
    d(\varphi^t(x),M) \leq \epsilon.
\]
\end{lemma}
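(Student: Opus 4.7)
The plan is to argue by contradiction, using compactness of $X$ to extract an accumulation point of the forward orbit of $x$, and then to derive a contradiction with the definition of a wandering point.

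Suppose the conclusion fails: there exists $\epsilon > 0$ and $x \in X$ such that for every $\bar t > 0$, one can find $t \geq \bar t$ with $d(\varphi^t(x), M) > \epsilon$. Choose inductively an increasing sequence $t_n \to \infty$ such that $d(\varphi^{t_n}(x), M) > \epsilon$ for all $n$. By compactness of $X$, we may extract a subsequence (still denoted $t_n$) with $\varphi^{t_n}(x) \to y$ for some $y \in X$. Since $M$ is closed in $X$ (its complement $W$ is open by definition) and $d(\varphi^{t_n}(x),M) > \epsilon$ for each $n$, we get $d(y,M) \geq \epsilon$, hence $y \notin M$, i.e. $y \in W$.

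Since $y$ is wandering, there exists an open neighbourhood $V$ of $y$ and some $\bar t_0 > 0$ such that $\varphi^t(V) \cap V = \emptyset$ for all $t \geq \bar t_0$. By convergence $\varphi^{t_n}(x) \to y$, for $n$ large enough we have $\varphi^{t_n}(x) \in V$. Moreover, since $t_n \to \infty$, after passing to a further subsequence we may assume $t_{n+1} - t_n \geq \bar t_0$ for every $n$ large. Applying the semigroup property, $\varphi^{t_{n+1}}(x) = \varphi^{t_{n+1}-t_n}(\varphi^{t_n}(x)) \in \varphi^{t_{n+1}-t_n}(V)$, which by the wandering property is disjoint from $V$. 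Yet $\varphi^{t_{n+1}}(x) \to y \in V$ forces $\varphi^{t_{n+1}}(x) \in V$ for $n$ large, contradicting disjointness.

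The only step that requires some care is the choice of subsequence with $t_{n+1}-t_n \geq \bar t_0$, which uses only $t_n \to \infty$, together with the observation that $M$ is closed (equivalently, $W$ is open, which follows immediately from its definition as a union of the neighbourhoods $V$ witnessing the wandering property at each of its points). The rest is routine: compactness produces the accumulation point $y$, the wandering hypothesis supplies the forbidden disjointness, and continuity of the flow plus the semigroup property close the loop.
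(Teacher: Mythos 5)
Your proof is correct, and it takes a genuinely different route from the paper. The paper argues directly by extracting a finite cover: it sets $K = \{x \in X : d(x,M)\geq \epsilon\}$, covers this compact subset of $W$ by finitely many wandering neighbourhoods $V_{x_1},\dots,V_{x_n}$, observes that the orbit can visit each $V_{x_i}$ only within a time window of bounded length (else the wandering property would be violated by the return), and concludes that after all windows are exhausted the orbit stays in $X \setminus K$. You instead argue by contradiction via sequential compactness: you extract an accumulation point $y$ of $\varphi^{t_n}(x)$ along a sequence of ``bad'' times, note that $y$ lies in $W$ because $M$ is closed (which you correctly justify by observing $W$ is open, since any point of a witnessing neighbourhood $V$ is itself wandering with the same $V$), and then use the semigroup property to derive a contradiction with the disjointness $\varphi^{t_{n+1}-t_n}(V)\cap V = \emptyset$. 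The two arguments buy slightly different things: the paper's finite-cover version makes it more visible that the orbit's total dwell time in $K$ is bounded by $n$ windows of length $T$, while your contradiction argument sidesteps entirely the bookkeeping of which $V_{x_i}$ has been visited and in what order — a point the paper's proof glosses over somewhat informally (``Eventually, all $V_z$ with $z\in E$ are exhausted''). Your write-up is arguably the more rigorous of the two.
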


\begin{proof}
Let $\epsilon > 0$ and define the following compact set,
\[
    K = \{ x \in X, ~d(x,M) \geq \epsilon\} \subset W.
\]
For all $x \in K$, there exists an open neighbourhood $V_x$ of $x$ and $t_x > 0$ such that for all $t \geq t_x$,
\[
    \varphi^t(V_x) \cap V_x = \emptyset.
\]
By compactness of $K$, we can extract a finite cover of $K$ from $(V_x)_{x \in K}$, say given by $E = \{x_1, \dots, x_n \}$. We note $T = \max_{x \in E} t_x$. If $x \in K$, then $x$ belongs to some $V_y$ with $y \in E$. However $x$ can only stay for a period of time $T$ and never come back. It might reach another $V_{y'}$ with $y' \in E$ different than $y$, and again can only stay for a period of time $T$ and never visit again $V_{y'}$. Eventually, all $V_z$ with $z \in E$ are exhausted, so that $x$ reaches and remains in $X \setminus K$. 

If $x \in X \setminus K$, then either it remains in $X \setminus K$, or it leaves and the previous reasoning brings $x$ in $X \setminus K$ forever. As a result, any initial condition tends toward $M$.
\end{proof}




\begin{lemma}[Lyapunov wanderers]
\label{lem:wanderers}
Let $f : U \subset \mathbb{R}^n \to \mathbb{R}^n$ be locally Lipschitz on $U$ open. Consider the following ODE,
\begin{equation}
    \label{eq:generalode}
    \dot{x} = f(x),
\end{equation}
and its flow $\varphi$. If $g : U \to \mathbb{R}$ is continuously differentiable such that for all $x \in U$,
\[
    \nabla g(x)^\top f(x) \leq 0,
\]
then we have,
\[
    f^{-1}(\{0\}) \subset M \subset (\nabla g^\top f)^{-1}(\{0\}),
\]
where $^{-1}$ denotes the preimage.
\end{lemma}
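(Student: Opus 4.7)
The statement splits into two inclusions, one essentially immediate and one substantive. For the first, if $f(x)=0$ then $t \mapsto x$ is the unique solution of the ODE through $x$, so $\varphi^t(x)=x$ for all $t\geq 0$. Hence $x \in \varphi^t(V)\cap V$ for every open neighbourhood $V$ of $x$ and every $t$, placing $x$ in the non-wandering set $M$.

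For the inclusion $M \subset (\nabla g^\top f)^{-1}(\{0\})$, I plan to argue by contradiction. Take $x \in M$ and suppose $\nabla g(x)^\top f(x) = -c$ with $c>0$ (the hypothesis $\nabla g^\top f \leq 0$ rules out the opposite sign). The plan is to extract from recurrence at $x$ a definite decrement of $g$ that continuity of $g$ forbids. By continuity of $\nabla g^\top f$ and $f$, pick $r>0$ with $\overline{B(x,r)} \subset U$ on which $\nabla g^\top f \leq -c/2$ and $\|f\|\leq K$, and set $\tau_0 = r/(2K)$. A standard escape/Gr\"onwall argument along the integral form of the ODE shows that any orbit starting in $B(x,r/2)$ is defined and remains in $\overline{B(x,r)}$ throughout $[0,\tau_0]$, so on this fixed window $g$ drops by at least $c\tau_0/2$. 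Because $g\circ \varphi$ is globally non-increasing (by the standing assumption $\nabla g^\top f \leq 0$), this yields
\[
    g(\varphi^t(y)) \leq g(y) - c\tau_0/2 \quad \text{for all } y \in B(x,r/2),\ t\geq \tau_0,
\]
whenever $\varphi^t(y)$ is defined.

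Now I invoke the non-wandering property of $x$: for each $\epsilon \in (0,r/2)$, applying it with $V=B(x,\epsilon)$ and threshold $\bar t = \tau_0$ produces $t_\epsilon \geq \tau_0$ and $y_\epsilon \in V$ such that $\varphi^{t_\epsilon}(y_\epsilon) \in V$. The preceding inequality then gives the fixed lower bound $g(y_\epsilon) - g(\varphi^{t_\epsilon}(y_\epsilon)) \geq cr/(4K) > 0$, independent of $\epsilon$. On the other hand, continuity of $g$ at $x$ makes $g(y_\epsilon)$ and $g(\varphi^{t_\epsilon}(y_\epsilon))$ both converge to $g(x)$ as $\epsilon \to 0$, so the difference tends to $0$ — contradiction. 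Thus $\nabla g(x)^\top f(x)=0$.

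The main obstacle I would watch for is that, during the long interval $[\tau_0, t_\epsilon]$ returning to $V$, the orbit may well leave $B(x,r)$, so we have no direct quantitative control on $\nabla g^\top f$ there. The fix is precisely to extract all strict descent from the initial guaranteed window $[0,\tau_0]$, whose length $r/(2K)$ depends only on data associated with $x$ and \emph{not} on the recurrence parameter $\epsilon$, and to use merely the qualitative monotonicity of $g\circ\varphi$ afterwards. This clean separation between a uniform strict descent on a fixed window and global non-increase is what makes the contradiction with the continuity of $g$ at $x$ possible.
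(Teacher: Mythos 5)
Your proof is correct, and it takes a cleaner route than the paper's to the substantive inclusion $M \subset (\nabla g^\top f)^{-1}(\{0\})$. Both arguments are contrapositives: start from a point $x$ with $\nabla g(x)^\top f(x) = -c < 0$, fix a ball where $\nabla g^\top f$ is bounded away from $0$ and $\|f\|$ is bounded, and extract a strict, uniform decrement of $g$ over a fixed time window to force $x$ to be wandering. Where you differ is in \emph{how} the decrement is extracted. The paper first argues the orbit must exit $\bar B(x^*, d)$ within a uniform time $\eta/\epsilon$ (because $g$ would otherwise decrease past its minimum on the ball), then uses the upper \emph{and lower} bounds on $\|f\|$ to lower-bound the transit time across the annulus between $\partial V$ and $\partial B$, and harvests the decrement of $g$ from that transit. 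You instead get the decrement immediately from a short trapping window $[0,\tau_0]$ with $\tau_0=r/(2K)$, during which the Gr\"onwall/escape bound keeps the orbit inside $\overline{B(x,r)}$ and the sign condition applies throughout; after that you only use global monotonicity of $g\circ\varphi$. This avoids both the ``orbit must leave'' lemma and the lower bound on $\|f\|$ (the paper's $\upsilon$), and makes the uniformity in the wandering threshold transparent because $\tau_0$ depends only on $x$. You also present the argument as a contradiction against the non-wandering property via a shrinking sequence of neighbourhoods $B(x,\epsilon)$, whereas the paper exhibits a specific wandering neighbourhood $W$ and (implicitly uniform) threshold; these are logically equivalent, but your formulation sidesteps a small imprecision in the paper where the threshold $t_2$ nominally depends on the initial point (it is in fact dominated by $\eta/\epsilon$). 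The trivial inclusion $f^{-1}(\{0\})\subset M$ is handled the same way in both: equilibria are non-wandering.
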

The function $g$ in this lemma act in a similar fashion as a Lyapunov function except we do not use this argument around a strict local minimum of $g$, but rather show all non-`$\nabla g^\top f$-critical' points are wandering.

\begin{proof}
Let $x^* \in U$ be such that,
\[
    \nabla g(x^*)^\top f(x^*) < 0,
\]
thus also $\|f(x^*)\| > 0$. Let then,
\[
    \epsilon = \frac{1}{2} | \nabla g(x^*)^\top f(x^*) | > 0
    \text{ and } \upsilon = \frac{1}{2} \| f(x^*) \| > 0.
\]
By continuity of $\nabla g^\top f$ and $f$, there exists $d > 0$ such that $\|x - x^*\| \leq d$ implies,
\[
| \nabla g(x)^\top f(x) | > \epsilon \text{ and } 3 \upsilon > \|f(x)\| > \upsilon.
\]
Call $B = B(x^*, d)$, the open ball of radius $d$ centered in $x^*$ and $V = B(x^*, \frac{d}{2}) \subsetneq B$. Let $x_0 \in V$ and $(I,x)$ be the unique maximal solution of \eqref{eq:generalode} with initial condition $x(0) = x_0$. Let $t >0$ be such that $x([0,t]) \subset \bar{B}$, then
\[
\eta \triangleq 2 \max_{\bar{B}} g > g(x(0)) - g(x(t)) = - \int_0^t \nabla g(x(s))^\top f(x(s)) \mathrm{d}s \geq \epsilon t.
\]
In particular, invoking the escape dilemma (lemma \ref{lem:escape}), we conclude that there exists $\frac{\eta}{\epsilon} \geq t > 0$ such that $x(t) \not \in \bar{B}$. Otherwise, the solution would be defined for all positive times, eventually contradicting $\eta > \epsilon t$.
\bigskip

By continuity of $x$ (and thus of $\|x-x^*\|$), there exists $t_2 > t_1 > 0$ such that,
\[
    x(t_1) \in \partial V \text{ and } x(t_2) \in \partial B.
\]
The time spent to leave $B$ must then be at least,
\begin{align*}
t_2 - t_1  &\geq \frac{1}{3 \upsilon} \int_{t_1}^{t_2} \| f(x(s)) \| \mathrm{d}s\\
&\geq \frac{1}{3 \upsilon} \left\| \int_{t_1}^{t_2} - f(x(s)) \mathrm{d}s \right\| = \frac{1}{3 \upsilon} \|x(t_2) - x(t_1)\| \\
& \geq \frac{1}{3 \upsilon} \| x(t_2) - \bar{x} \| - \frac{1}{3 \upsilon} \| x(t_1) - \bar{x} \| \geq \frac{d}{6 \upsilon},
\end{align*}
so that $g \circ x$ will have decreased during that passage, at least by,
\[
g(x(t_1)) - g(x(t_2)) = - \int_{t_1}^{t_2} \nabla g(x(s))^\top f(x(s)) \mathrm{d}s \geq \epsilon (t_2 - t_1) \geq \frac{\epsilon d}{6 \upsilon} > 0.
\]
\bigskip

Now, by continuity of $g$, there exists $\frac{d}{2} > \delta > 0$ such that $\|x-x^*\| < \delta$ implies,
\[
    |g(x) - g(x^*)| \leq \frac{\epsilon d}{14 \upsilon}.
\]
Define then $W = B(x^*, \delta) \subsetneq V$ and assume $x_0 \in W$. Then, for all time $t \geq t_2$,
\begin{align*}
    |g(x(t)) - g(x^*)|
    &\geq - |g(x(0)) - g(x^*)| + g(x(0)) - g(x(t)) \\
    &\geq - \frac{\epsilon d}{14 \upsilon} + g(x(t_1)) - g(x(t_2)) \\
    &\geq - \frac{\epsilon d}{14 \upsilon} + \frac{\epsilon d}{6 \upsilon} > \frac{\epsilon d}{14 \upsilon},
\end{align*}
so that $x(t) \not \in W$. Thus $\varphi(W,t) \cap W = \emptyset$ for all $t \geq t_2$, thus $x^*$ is wandering.
\medskip

Finally, if $x^*$ is such that $f(x^*) = 0$, then $x^*$ is an equilibrium, thus a non-wandering point.
\end{proof}

\newpage


In this section, the time $t_0$ is fixed so that $f$ refers to $f(\cdot, t_0)$ and likewise for $h$. Assume $x$ is a solution of \eqref{eq:odescaledflat}, for some initial condition. Informally, $f$ is a Lyapunov function in the sense that $s \mapsto f(x(s))$ is non-increasing, and actually is decreasing if $x(s)$ does not satisfy the necessary KKT conditions. If we add some assumptions on the landscape $f$ draws, we will prove that almost all initial conditions make $x$ converge to a local minima.

\begin{definition}
Let $f: U \subset \mathbb{R}^n \to \mathbb{R}$ and $h: U \subset \mathbb{R}^n \to \mathbb{R}^m$ be continuous, with $U$ open. We say that $x \in U$ is a constrained local minimum of $f$ (subject to $h$) if $h(x) = 0$ and there exists an open neighbourhood $V$ of $x$ such that for all $x' \in V \cap h^{-1}(\{0\})$,
\[
    f(x) \leq f(x').
\]
We denote by $\mathcal{M}_-(f,h)$ the set of local minima of $f$ subject to $h$. Likewise $x$ is a strict local minima if $h(x) = 0$ and there exists an open neighbourhood $V$ of $x$ such that for all $x' \in V \cap h^{-1}(\{0\})$,
\[
    f(x) \geq f(x') \implies x = x'.
\]
We denote by $\mathcal{M}_{--}(f,h)$ the set of strict local minima. Likewise, $\mathcal{M}_+(f,h)$ is the set of local maxima and $\mathcal{M}_{++}(f,h)$ the set of strict local maxima.
\end{definition}

\begin{definition}
Let $f: U \subset \mathbb{R}^n \to \mathbb{R}$ and $h: U \subset \mathbb{R}^n \to \mathbb{R}^m$ be continuous, with $U$ open. We say that $x \in U$ is a critical point of $f$ (subject to $h$) if $h(x) = 0$ and 
\[
    \nabla_x f(x) \in (\ker J)^\perp,
\]
namely if $x$ satisfies the necessary Lagrange condition. We note $\mathcal{C}(f,h)$ the set of critical points.
\end{definition}

Even if there is no constraint, we note that $f$ could have no local minima in general,
\[
    x \mapsto e^x,
\]
or that the set of local minima may not be bounded,
\[
    x \mapsto \sin x,
\]
or even that the set of local minima may not be closed,
\[
    x \mapsto x^3 \sin \frac{1}{x}.
\]
However, we can consider these cases rather to be rare in applications. Notably, if $f$ is coercive, then the set of local minima is nonempty and since for $x$ solution of \eqref{eq:odescaledflat}, $s \mapsto f(x(s))$ is non-increasing, $x$ is bounded so that we can restrict its stability analysis on a compact. Further, the last problem is really pathological, although the provided example is twice continuously differentiable. Note though that the set of critical points is closed as we can rewrite it,
\[
    \mathcal{C}(f,h) = h^{-1}(\{0\}) \cap (P \nabla_x f)^{-1}(\{0\}),
\]
where $P$ is the orthogonal projection on $\ker J$, and $^{-1}$ denotes here the preimage.

\begin{proposition}
Let $f: U \subset \mathbb{R}^n \to \mathbb{R}$ and $h: U \subset \mathbb{R}^n \to \mathbb{R}^m$ be twice differentiable, with $U$ open. If $f$ is coercive, then for all initial condition there is a unique global solution to \eqref{eq:odescaledflat} (defined for positive times), converging to $\mathcal{C}(f,h)$, furthermore .
\end{proposition}

\begin{proof}
The proof relies mainly on two lemmas, presented in appendix \ref{app:ode}. Firstly, the escape dilemma (Lemma \ref{lem:escape}) states that the maximal solution to \eqref{eq:odescaledflat} (with a given initial condition) is either defined for all positive times, or leaves any compact.
\medskip

By lemma \ref{lem:proj}, the matrix,
\[
    P \triangleq (I_n - J^\top (J J^\top)^{-1} J),
\]
is the orthogonal projection on $\ker J$ and thus $I_n \succeq P \succeq 0$. Let now $x_0 \in U$. As the function,
\[
    g: x \in U \mapsto - P \nabla_x f(x),
\]
is continuously differentiable, by Picard-Lindel\"of theorem, there exists a unique maximal solution $(I,x)$ satisfying the initial condition $x(0) = x_0$, where $I$ is an open interval containing $0$ and we define $I_+ = I \cap \mathbb{R}_+$.

Then,
\[
    \frac{\mathrm{d}}{\mathrm{d} s} f(x(s)) = - \nabla_x f(x(s))^\top P \nabla_x f(x(s)) \leq 0,
\]
with equality if and only if $\nabla_x f(x(s))$ belongs to $(\ker J)^\perp$, that is if and only if $x(s)$ is a critical point. For now, this means that $x(I_+) \subset f^{-1}((-\infty, f(x_0)]) \triangleq K$ compact, in other words $x$ never leaves $K$, thus is defined for all positive times by the escape dilemma (lemma \ref{lem:escape}).
\bigskip

Secondly, we invoke the Lyapunov wanderers lemma (lemma \ref{lem:wanderers}). We note that,
\[
    g^{-1}(\{0\}) = \mathcal{C}(f,h) \subset M \subset (\nabla_x f^\top g)^{-1}(\{0\}) = \mathcal{C}(f,h).
\]
Therefore, by lemma \ref{lem:wanderers},
\[
    d(x(s), \mathcal{C}(f,h)) \to_s 0. \qedhere
\]
\end{proof}

Additionally, we should like to add that strict local maxima are unstable, this can be simply seen by reversing time and using a classic Lyapunov argument, while strict local minima are stable.

\begin{proposition}
If $x \in \mathcal{M}_{++}(f,h)$, $x$ is `repellent', while if $x \in \mathcal{M}_{--}(f,h)$, $x$ is stable, and asymptotically stable if further it is isolated from $\mathcal{C}(f,h)$.
\end{proposition}

\begin{proof}
This simply relies on Lyapunov's theorem, with $f$ as a Lyapunov function as previously seen. Let $x^* \in \mathcal{M}_{--}(f,h)$, then let $V$ be an open neighbourhood of $x^*$ such that for all $x \in V \cap h^{-1}(\{0\})$,
\[
    f(x^*) \geq f(x) \implies x = x^*.
\]
Without loss of generality $V$ is bounded. Let then,
\[
    \epsilon \triangleq \max_{x \in \bar{V}} f(x) - f(x^*),
\]
and define the level set,
\[
    V' \triangleq f^{-1}([f(x^*), f(x^*) + \nicefrac{\epsilon}{2})) \cap \bar{V}.
\]
If $x_0 \in V'$, then $\varphi^t(x_0) \in V'$ for all positive times $t$, thus $x^*$ is stable. If further $x^*$ is isolated, without loss of generality $V$ does not contain any point of $\mathcal{C}(f,h)$ other than $x^*$ itself, then by Lyapunov wanderers lemma applied to $\bar{V}'$, the non-wandering set is $M = \{x^*\}$, and thus for all $x_0 \in \bar{V}'$,
\[
    \varphi^t(x_0) \to_t x^*.
\]
Finally, if $x^*$ is a strict maximum, it is repellent in the sense that for all open neighbourhood $V$ of $x^*$, there exists a neighbourhood $V'$ of $x^*$ such that $x_0 \not \in V$ implies that for all $t \geq 0$, $\varphi^t(x_0) \not \in V'$. Again, this is seen easily as $f$ is a Lyapunov function, with an application of the intermediate value theorem.
\end{proof}

\clearpage

\newpage
\section{Properties of the Modified Continuous Newton Method}
\label{appendix-RammTheorem}

Let us consider the problem of solving such a system in its generality, wherein we have Banach spaces $X, Y$ and a map $F: X \to Y$:

\begin{align}
\label{Rammoriginal}
F(u) = f
\end{align}

Recall that map $F$ is called Fr{\' e}chet differentiable at $u \in X$ if there exists a bounded linear operator $A: X \to Y$ such that

\begin{align}
\lim _{\|h\|\to 0}{\frac {\|F(u+h)-F(u)-Ah\|_{Y}}{\|h\|_{X}}}=0.
\end{align}

and let us use $F'$ to denote the Fr{\' e}chet derivative.
Instead of using continuous-time Newton method:

\begin{align}
\label{RammNewton}
\dot u &= -[F'(u)]^{-1} [F(u) - f]  \\
\dot u &= u_0
\end{align}

for finding the zeros of $F(u)$, which involves the inversion, we may consider a higher-dimensional problem in $(u, Q)$:

\begin{align}
\label{Rammapprox2}
    \dot u & = - Q[F(u) - f] \\
    u(0) & = u_0 \\
    \dot Q & = -[(F'(u))^* F'(u)] Q + (F'(u))^* \\
    Q(0) & = Q_0
\end{align}

This approach may seem heuristic at first, but notice that:

\begin{assumption}[Bounded Frechet Derivatives]
\label{Ramm93} 
Consider the $j$th Frechet derivatives $F^{(j)}(u)$ at $u$ bounded from above for all $u$ within a ball $B(u_0, R)$ of radius $R$ centered at $u_0$:
\begin{align}
    \sup_{u \in B(u_0, R) } \| F^{(j)}(u) \| \le M_j(R), \quad 0 \le j \le 2
\end{align}
\end{assumption}

\begin{assumption}[Well-Conditioned Inverse]
\label{Ramm92}
\begin{align}
\sup_{u \in B(u_0, R) } \| F'(u) \|^{-1} \le m(R)
\end{align}
\end{assumption}

\begin{theorem}[Ramm's Theorem for Well-Conditioned Problems]
Under Assumptions \ref{Ramm92}--\ref{Ramm93},
consider an equation \eqref{Rammoriginal} with solution $y$.
Let us have $u_0$ sufficiently close to $y$,
and $Q_0$ sufficiently close to $(F'(y))^{-1}$.
Then problem \eqref{Rammapprox2} has a unique global solution
and there exists a time $t$ and $u(t)$ such that
$u(t) = y$, i.e., 
\begin{align}
    \lim_{t \to \infty} \| u(t) - y \| = 0 \\
    \lim_{t \to \infty} \| Q(t) - (F'(y))^{-1} \| = 0
\end{align}
\end{theorem}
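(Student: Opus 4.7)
The plan is to verify local well-posedness, exhibit a Lyapunov function that certifies exponential contraction toward the equilibrium $(y, (F'(y))^{-1})$, and then use forward invariance of a small ball together with the escape dilemma (Lemma \ref{lem:escape}) to extend the solution globally.

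First I would check that Picard--Lindel\"of applies to \eqref{Rammapprox2}: the right-hand side is a composition of $F$, $F'$, bilinear products involving $Q$, and the adjoint, and Assumption \ref{Ramm93} supplies local Lipschitz bounds via $M_1$ and $M_2$, so a unique maximal solution $(u(t),Q(t))$ exists. Next I would pass to the error variables $e(t) := u(t)-y$ and $\tilde Q(t) := Q(t) - (F'(u(t)))^{-1}$. Since $F(y)=f$, Taylor expansion gives $F(u)-f = F'(u)e + O(\|e\|^2)$, so
\[
\dot e \;=\; -(F'(u))^{-1}[F(u)-f] \;-\; \tilde Q[F(u)-f] \;=\; -e \;-\; \tilde Q\,F'(u)\,e \;+\; O(\|e\|^2),
\]
which is a continuous Newton vector field perturbed by a term linear in $\tilde Q$. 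Differentiating the defining identity for $\tilde Q$ and inserting the $Q$-equation yields
\[
\dot{\tilde Q} \;=\; -\,(F'(u))^{*}F'(u)\,\tilde Q \;+\; \Xi(u,\dot u),
\]
whose homogeneous part contracts at rate at least $m(R)^{-2}$ by Assumption \ref{Ramm92} (since $\langle(F'(u))^*F'(u)x,x\rangle = \|F'(u)x\|^2 \ge \|x\|^2/m(R)^2$), while the drift $\Xi$ is controlled by $M_2(R)\|\dot u\|$, hence by $\|e\|$ and $\|\tilde Q\|$, through Assumption \ref{Ramm93}.

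Then I would evaluate the candidate Lyapunov function $V(u,Q) := \|e\|^2 + c\,\|\tilde Q\|^2$ along trajectories, for a constant $c>0$ to be tuned. The diagonal terms contribute $-2\|e\|^2 - 2c\,m(R)^{-2}\|\tilde Q\|^2$, while the cross-coupling terms and Taylor remainders are sign-indefinite but can be absorbed through Young's inequality, provided $c$ is taken sufficiently large and we restrict to a sublevel set $\{V\le r^2\}$ small enough that the quadratic remainders are dominated. This yields $\dot V \le -\gamma V$ on that set, which is therefore forward invariant. For $u_0$ close enough to $y$ and $Q_0$ close enough to $(F'(y))^{-1}$, the initial condition lies in this invariant sublevel set; the escape dilemma then rules out finite-time blow-up, so the solution is global, and the bound $V(t)\le V(0)e^{-\gamma t}$ delivers both limits asserted by the theorem.

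The main obstacle is the tuning inside the Lyapunov step: the cross term comes from the motion of the slow manifold $Q=(F'(u))^{-1}$ through $F''(u)\,\dot u$ and is \emph{a priori} sign-indefinite. Dominating it by the diagonal contraction simultaneously requires the second-derivative bound $\|F''\|\le M_2(R)$ from Assumption \ref{Ramm93} and the uniform well-conditioning $\|F'(u)^{-1}\|\le m(R)$ from Assumption \ref{Ramm92}, and it imposes the hypothesis that $u_0$ and $Q_0$ start close enough to the equilibrium so that $r$ may be chosen small enough to also absorb the $O(\|e\|^2)$ Newton remainder.
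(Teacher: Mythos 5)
The paper itself does not prove this theorem; it states it and cites Airapetyan and Ramm \cite[Theorem~3.1]{Airapetyan1999}, so there is no in-paper proof to compare against. Your Lyapunov reconstruction is the right kind of argument and essentially the one used in the dynamical-systems literature that the paper cites: local existence by Picard--Lindel\"of, pass to error variables, note that the choice $\tilde Q := Q - (F'(u))^{-1}$ makes the homogeneous part of the $\dot Q$-equation exactly $-(F'(u))^*F'(u)\tilde Q$ (the extra $(F'(u))^*$ cancels), exploit $\langle (F'(u))^*F'(u)x,x\rangle = \|F'(u)x\|^2 \ge m(R)^{-2}\|x\|^2$ which follows from Assumption~\ref{Ramm92}, dominate the $F''$-driven drift using Assumption~\ref{Ramm93}, and close with a weighted quadratic Lyapunov function plus forward invariance of a small sublevel set and the escape dilemma. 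You also implicitly need $(F'(u))^{-1}\to (F'(y))^{-1}$ to convert $\tilde Q\to 0$ into $\|Q(t)-(F'(y))^{-1}\|\to 0$; this follows from $u\to y$ and continuity of $F'$, and is worth stating.

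One quantitative detail is reversed. The only cross-coupling that survives at quadratic order in $(e,\tilde Q)$ is the drift $\Xi = (F'(u))^{-1}F''(u)[\dot u](F'(u))^{-1} = O(\|e\|)$ entering the $\tilde Q$-equation; the term $-\tilde Q F'(u)e$ in the $\dot e$-equation is bilinear, hence cubic in $V$, and is absorbed by shrinking the ball. Writing $\dot{\|e\|^2} \le -2\|e\|^2 + (\text{cubic})$ and $\dot{\|\tilde Q\|^2} \le -\tfrac{1}{m^2}\|\tilde Q\|^2 + C'\|e\|^2$ after Young's inequality, one gets for $V = \|e\|^2 + c\|\tilde Q\|^2$ that $\dot V \le (-2 + cC')\|e\|^2 - \tfrac{c}{m^2}\|\tilde Q\|^2 + (\text{cubic})$, which is negative definite on a small ball precisely when $c$ is \emph{small} (say $c < 2/C'$), not large. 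Since the system is a cascade where $e$ drives $\tilde Q$ but $\tilde Q$ only enters $\dot e$ bilinearly, the downstream variable must carry a small weight. This is a sign error in the tuning, not a structural flaw; with $c$ small the argument closes exactly as you describe.
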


One can relax Assumptions \ref{Ramm92} at the expense of a more technical result. One can also prove exponential rate of convergence \cite[Theorem 3.1]{Airapetyan1999}.


\end{document}